\numberwithin{equation}{section}
\theoremstyle{definition}
\newtheorem{definition}{Definition}[section]
\theoremstyle{remark}
\newtheorem{remark}[definition]{Remark}
 \theoremstyle{plain}
\newtheorem{theorem}[definition]{Theorem}
\newtheorem{result}[definition]{Result}
\newtheorem{lemma}[definition]{Lemma}
\newtheorem{proposition}[definition]{Proposition}
\newtheorem{corollary}[definition]{Corollary}
\newcommand{\lam}{\lambda}
\newcommand{\eps}{\epsilon}
\newcommand{\zt}{\zeta}
\newcommand{\OM}{\Omega}
\newcommand{\D}{\mathbb{D}}
\newcommand{\hol}{\mathcal{O}}
\newcommand{\Sn}{S_n(\Omega)}
\newcommand\sym[1]{\Sigma^{#1}(\Omega)}
\newcommand\intgR[2]{[{#1}\,.\,.\,{#2}]}
\newcommand{\lrarw}{\longrightarrow}
\newcommand{\ch}{{\bf c}}
\newcommand\bv[1]{{#1}^\bullet}
\newcommand{\nat}{\mathbb{N}}
\newcommand{\I}{\mathbb{I}}
\newcommand{\bcdot}{\boldsymbol{\cdot}}
\newcommand\intf[4]{\genfrac{#1}{#2}{0.5pt}{0}{#3}{#4}}
\newcommand{\mats}{\mathcal{S}}
\newcommand{\C}{\mathbb{C}} 
\newcommand\mfd[1]{\mathscr{#1}}
\newcommand*{\rom}[1]{\expandafter\@slowromancap\romannumeral #1@}
\begin{document}

\title[Spectral Cartan Theorem]{On a spectral version of Cartan's theorem}

\author{Sayani Bera, Vikramjeet Singh Chandel and Mayuresh Londhe}
\address{Indian Association for the Cultivation of Science, Kolkata 700032, India}
\email{sayanibera2016@gmail.com}
\address{Harish-Chandra Research Institute, Prayagraj (Allahabad) 211019, India}
\email{abelvikram@gmail.com, vikramjeetchandel@hri.res.in}
\address{Department of Mathematics, Indian Institute of Science, Bangalore 560012, India}
\email{mayureshl@iisc.ac.in}

\keywords{Spectrum-preserving maps, symmetrized product, iteration theory}

\subjclass[2010]{Primary: 32H02, 32H50; Secondary: 47A56, 32F45}

\begin{abstract}
For a domain $\Omega$ in the complex plane,
we consider the domain $S_n(\Omega)$ consisting of those $n\times n$ complex matrices whose spectrum is 
contained in $\Omega$. Given a holomorphic self-map $\Psi$ of $S_n(\Omega)$ such that
$\Psi(A)=A$ and the derivative of $\Psi$ at $A$ is identity for some $A\in S_n(\Omega)$, we investigate
when the map $\Psi$ would be spectrum-preserving. We prove that
if the matrix $A$ is either diagonalizable or non-derogatory then for {\em most} domains $\Omega$,
$\Psi$ is spectrum-preserving on $S_n(\Omega)$.
Further, when $A$ is arbitrary,
we prove that $\Psi$ is spectrum-preserving on a certain analytic subset of $S_n(\Omega)$.

\end{abstract}
\maketitle

\section{Introduction and statement of main results}\label{S:intro}

A well-known result of Cartan about holomorphic self-maps, also known as Cartan's uniqueness theorem,
says:
every holomorphic self-map of a bounded domain (in the complex Euclidean space) that has a fixed point so that the
derivative of the holomorphic map at the fixed point is identity has to be the identity map on the given bounded domain.
The above result was generalized to taut complex manifolds by Wu \cite{Wu:Normfamily67}
and shortly later to the case of Kobayashi hyperbolic complex manifolds by Kobayashi \cite{Kobayashi:hm&hm70}.
The purpose of this article is to explore holomorphic
self-maps of certain matricial domains\,---\,that are not Kobayashi hyperbolic (thus not taut)\,---\,in the spirit of Cartan's Theorem.
We begin with introducing these domains.
\smallskip

Given $n\in\nat$, $n\geq 2$, we denote by $M_n(\C)$ the set of all $n\times n$ complex matrices. 
For a matrix $W\in M_n(\C)$, the 
{\em spectrum} of $W$ is the set of eigenvalues of $W$ and is denoted by $\sigma(W)$.
Let $\OM$ be a domain in the complex plane $\C$, we consider the set 
\[
 \Sn\,:=\,\{W\in M_n(\C)\,:\,\sigma(W)\subset\OM\}.
\]
Note that $\Sn$ is an open and connected subset of $M_n(\C)\equiv\C^{n^2}$. 
In the case when $\OM=\D$, where $\D$ is the open unit disc in $\C$ 
centered at the origin, the domain $S_n(\D)$ is called the
{\em spectral unit ball}. 
In \cite{Ransford-White: holselfspectunitball91}, Ransford--White initiated function-theoretic study of the spectral unit ball. 
Since then the spectral unit ball
has been studied intensively in the literature; see, for instance, \cite{AndristKutz:fdpautspecball},
\cite{Bharali:obsintpspecball07}, \cite{CostaraRansford:locirredspect07},
\cite{NikolovPflugThomas:SpecNevancaraFejer11}, \cite{wz:phmsub08} and the refrences therein.
We now enlist an important observation about the domains $\Sn$.
\begin{lemma}\label{L:tautkobanot}
For any domain $\OM\subset\C$ and $n\geq 2$, the domain $\Sn$ is not Kobayashi hyperbolic.
\end{lemma}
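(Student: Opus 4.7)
\medskip\noindent\textbf{Proof proposal.}
The plan is to exhibit a non-constant entire holomorphic map from $\C$ into $\Sn$, which rules out Kobayashi hyperbolicity via the distance-decreasing property of the Kobayashi pseudodistance (recall that $k_{\C}\equiv 0$, so any entire map into a hyperbolic manifold must be constant).

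The construction uses nilpotents. Fix any point $\lam_0\in\OM$, and, using $n\geq 2$, choose a nonzero nilpotent matrix $N\in M_n(\C)$\,---\,for instance, the matrix with a $1$ in the $(1,2)$ entry and zeros elsewhere. Define
\[
 \varphi:\C\lrarw M_n(\C),\qquad \varphi(\zt)\,:=\,\lam_0 I_n + \zt N.
\]
Since $N$ is nilpotent, the only eigenvalue of $\varphi(\zt)$ is $\lam_0$ for every $\zt\in\C$, i.e.\ $\sigma(\varphi(\zt))=\{\lam_0\}\subset\OM$. Hence $\varphi$ takes values in $\Sn$, and since $N\neq 0$, the map $\varphi$ is non-constant.

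Now assume toward a contradiction that $\Sn$ is Kobayashi hyperbolic, and let $k_{\Sn}$ denote its Kobayashi pseudodistance. By the distance-decreasing property applied to $\varphi$, we have $k_{\Sn}(\varphi(\zt_1),\varphi(\zt_2))\leq k_{\C}(\zt_1,\zt_2)=0$ for all $\zt_1,\zt_2\in\C$. Hyperbolicity of $\Sn$ then forces $\varphi(\zt_1)=\varphi(\zt_2)$, contradicting the fact that $\varphi$ is non-constant. Thus $\Sn$ cannot be Kobayashi hyperbolic.

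There is really no hard step here: everything reduces to the observation that the spectrum of an upper-triangular matrix is determined by its diagonal, which lets a nilpotent perturbation of a scalar matrix supply an entire curve in $\Sn$. The only place where one needs to be careful is in justifying why the presence of a non-constant entire curve obstructs hyperbolicity; this is, however, a standard consequence of the definition of the Kobayashi pseudodistance and is the cleanest tool available for this kind of negative result.
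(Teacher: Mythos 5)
Your proof is correct and rests on the same core mechanism as the paper's: exhibit a non-constant holomorphic map $\C\lrarw\Sn$ and invoke the Liouville-type consequence of the Kobayashi distance-decreasing property (Result~\ref{Res:LT} in the paper). The difference is in the choice of curve. You take the simplest possible one, $\varphi(\zt)=\lam_0 I_n+\zt N$ with $N$ nilpotent, which is all the lemma needs. The paper instead fixes an arbitrary $W\in\Sn$, writes $W=\exp(-C)(D+U)\exp(C)$ with $D$ diagonal and $U$ strictly upper triangular, and sets $f(\zt)=\exp(-C\zt)(D+\zt U)\exp(C\zt)$; this produces a non-constant entire curve \emph{through $W$ itself} with constant spectrum equal to $\sigma(W)$. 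That extra generality is not needed to prove the lemma, but it substantiates the stronger sentence the authors state immediately afterwards (that such a curve exists through every point of $\Sn$), and the same conjugation trick is reused verbatim in the proof of Lemma~\ref{L:prespectra}. So your argument is a leaner proof of the stated lemma; the paper's is a deliberate one-construction-serves-two-purposes choice.
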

In fact, for every $\Sn$, $n\geq 2$, and for any $W\in\Sn$ there exists a {\bf non-constant} holomorphic map $f_W:\C \lrarw \Sn$
such that $\sigma(f(\bcdot))=\sigma(W)$ on $\C$.
We postpone the proof of Lemma~\ref{L:tautkobanot} to Section~\ref{S:Basicprelim}, where we also recall several
relevant definitions and results.
Thus given a holomorphic self-map
$\Psi$ of $\Sn$, $n\geq 2$, such that $\Psi(A)=A$ and
the derivative of $\Psi$ at $A$ is identity, i.e., $\Psi'(A)=\I$, the aforementioned 
results of Kobayashi and Wu
cannot be directly applied to conclude that $\Psi$ is the identity map on $\Sn$. Indeed, there exists a holomorphic 
self-map $\Psi$ of $S_2(\D)$ such that $\Psi(0)=0$ and $\Psi'(0)=\I$ that is not even injective
(see \cite[Section~0]{Ransford-White: holselfspectunitball91}).
\smallskip

To study the holomorphic self-maps of $\Sn$, we employ its relation with the $n$-th symmetrized product of $\OM$
which, in general, have many nice properties. One such important property
is that\,---\,while none of the domains $\Sn$ are Kobayashi hyperbolic\,---\,the $n$-th symmetrized product of $\OM$
is Kobayashi hyperbolic for most domains $\OM\subset\C$. To define this latter object,
we consider the {\em symmetrization map} $\pi_n:\C^n\lrarw\C^n$ defined by
$\pi_n(z):=\big(\pi_{n,\,1}(z),\dots,\pi_{n,\,j}(z),\dots,\pi_{n,\,n}(z)\big)$,
where $\pi_{n,\,j}(z)$ is the $j$-th elementary symmetric polynomial in
variables $z_1,\dots,z_n$ for $z:=(z_1,\dots,z_n)$.
In other words, we have
\[
\prod_{j=1}^n(t-z_j)=t^n+\sum_{j=1}^n (-1)^j\pi_{n,\,j}(z_1,\dots,z_n)\,t^{n-j}, \ \ \ t\in\C.
\]
The {\em $n$-th symmetrized product of $\Omega$}, denoted by $\Sigma^{n}(\Omega)$, is defined by
$\Sigma^{n}(\Omega):=\pi_n(\Omega^n)$. Since the symmetrization map $\pi_n:\C^n\lrarw\C^n$ is a proper holomorphic map,  
it follows that $\Sigma^{n}(\Omega)$ is a domain in $\C^n$.
\smallskip

The aforementioned relation between $\Sn$ and $\Sigma^{n}(\Omega)$
is via the map $\ch:M_n(\C)\longrightarrow\C^n$ defined by $\ch(W):=\big(c_1(W),\dots,c_n(W)\big)$ where
the polynomial 
\[
 t^n+\sum_{k=1}^n (-1)^{k}\,c_k(W)t^{n-k}
 \]
is the characteristic polynomial of $W$. 
We shall denote the restriction of the map $\ch$ to any open subset of $M_n(\C)$ by $\ch$ itself.
Observe for each $k$, $1\leq k\leq n$, since $c_k(W)$ is the sum of all principal minors of order $k$ of the matrix $W$, 
$\ch$ is a holomorphic map on $M_n(\C)$.
Further,
if $\{\lam_1,\dots,\lam_n\}$ is a list of eigenvalues of $W$,
repeated with algebraic multiplicity, then $c_k(W)=\pi_{n,\,k}(\lam_1,\dots,\lam_n)$ for each $k$, where $c_k(W)$ 
is the $k$-th coordinate
of $\ch(W)$. It now follows that
\[
\ch(\Sn)=\Sigma^n(\Omega) \ \ \text{and} \ \ \Sn=\ch^{-1}(\Sigma^n(\Omega))
\] 
for any domain $\OM \subset \C$.
It turns out that the domain $\Sigma^n(\Omega)$ is Kobayashi hyperbolic (and also Kobayashi complete)
if and only if the cardinality 
of $\C\setminus \Omega$ is at least $2n$ (see Result~\ref{Res:kobhypsymprod}).
In Section~\ref{S:prelim_results}, using the Kobayashi hyperbolicity of $\Sigma^n(\OM)$, we prove that 
every holomorphic self-map $\Psi$ of $\Sn$ induces a unique holomorphic self-map $G_{\Psi}$ of $\Sigma^n(\Omega)$ such that 
$\ch\circ\Psi=G_{\Psi}\circ\ch$, i.e., the following diagram commutes:
\vspace{-0.3cm}
\begin{figure}[h!]
\[
\begin{tikzcd}
 \Sn \arrow[rr, "\Psi"] 
  \arrow[d, "\ch"]
  & 
  & \Sn \arrow[d, "\ch"] \\
  \Sigma^n(\Omega) \arrow[rr, "G_{\Psi}"]
  &
  & \Sigma^n(\Omega)\\
\end{tikzcd}
\] 
\vspace{-1.1cm}
\caption{}
\label{F:comd1}
\end{figure}

By studying the map $G_{\Psi}$, we are led to the first main result of this article.  

\begin{theorem}\label{T:mainT1}
Given $n\in\nat$, $n\geq 2$, and a domain $\Omega \subset \C$ satisfying $\#(\C\setminus \Omega) \geq 2n$. 
Let $\Psi$ be a holomorphic self-map of $\Sn$ such that $\Psi(A)=A$ and $\Psi'(A)=\I$ for some $A \in \Sn$. 
Assume that the matrix $A$ is either
a diagonalizable matrix or a non-derogatory matrix. Then
\[
 \ch(\Psi(W))=\ch(W) \ \ \text{for every} \ \ W\in\Sn.
 \]
Consequently, $\sigma\big(\Psi(W)\big)=\sigma\big(W\big)$ and the 
algebraic multiplicity of each eigenvalue is preserved for every $W\in\Sn$,
i.e., $\Psi$ is spectrum-preserving on $\Sn$. 
\end{theorem}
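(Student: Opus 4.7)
The plan is to work with the induced holomorphic self-map $G_\Psi$ on $\sym{n}$ obtained from Figure~\ref{F:comd1}: the goal is to prove $G_\Psi = \mathrm{id}_{\sym{n}}$, for then the commutativity immediately yields $\ch(\Psi(W)) = \ch(W)$ for every $W \in \Sn$. Since $\#(\C\setminus\OM) \geq 2n$, the domain $\sym{n}$ is Kobayashi hyperbolic\,---\,in fact Kobayashi complete, hence taut. The commutative diagram gives $G_\Psi(\ch(A)) = \ch(A)$, so by the Cartan--Wu--Kobayashi uniqueness theorem it suffices to establish $G_\Psi'(\ch(A)) = \I$. Differentiating $\ch\circ\Psi = G_\Psi\circ\ch$ at $A$ with $\Psi'(A) = \I$ gives $d\ch|_A = G_\Psi'(\ch(A)) \circ d\ch|_A$, so $G_\Psi'(\ch(A))$ acts as the identity on $V_1 := \mathrm{Image}(d\ch|_A) \subseteq \C^n$; set $T := \I - G_\Psi'(\ch(A))$, so that $T|_{V_1} = 0$.

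For $A$ non-derogatory, $A$ is $\mathrm{GL}_n(\C)$-conjugate to the companion matrix of its characteristic polynomial, on whose affine subspace $\ch$ is a biholomorphism; conjugation-invariance transports this to $A$, so $d\ch|_A$ is surjective, $V_1 = \C^n$, $T = 0$, and the claim follows.

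For $A$ diagonalizable, $V_1$ has dimension equal to the number of \emph{distinct} eigenvalues of $A$, which may be less than $n$, and one must extract further information by expanding the commutation $\ch(\Psi(A+tY)) = G_\Psi(\ch(A+tY))$ at higher orders in $t$. Specializing the second-order expansion to $Y \in \ker(d\ch|_A)$ (which annihilates both the first-order terms and the $G_\Psi''$-contribution) yields
\[
T\bigl(d^2\ch|_A(Y, Y)\bigr) \;=\; -\,d\ch|_A\bigl(\Psi''(A)(Y, Y)\bigr) \;\in\; V_1.
\]
Iterating this kind of argument builds a filtration $V_1 \subseteq V_2 \subseteq V_3 \subseteq \cdots$ of subspaces of $\C^n$ with $T(V_k) \subseteq V_{k-1}$ for every $k \geq 1$ (where $V_0 := \{0\}$), the space $V_k$ being obtained by adjoining appropriate $k$-th order jets of $\ch$ at $A$. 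Since $\ch(\Sn) = \sym{n}$ is open in $\C^n$, the Taylor series of $\ch$ at $A$ cannot lie in a proper linear subspace: any annihilating linear functional would be constant on $\Sn$ by analytic continuation, contradicting openness of $\sym{n}$. Hence $V_N = \C^n$ for some finite $N$, so $T^N = 0$ and $T$ is nilpotent; in particular, every eigenvalue of $G_\Psi'(\ch(A)) = \I - T$ equals $1$.

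Finally, tautness of $\sym{n}$ forces $\{G_\Psi^k\}$ to be a normal family, so $\{G_\Psi'(\ch(A))^k\}_{k\geq 1}$ is bounded in norm. But $(\I - T)^k = \sum_{j=0}^{N-1}\binom{k}{j}(-1)^j T^j$ grows polynomially in $k$ unless $T = 0$, so $T = 0$ and $G_\Psi'(\ch(A)) = \I$. The Cartan--Wu--Kobayashi theorem then yields $G_\Psi = \mathrm{id}_{\sym{n}}$, completing the proof. The principal difficulty lies in the diagonalizable case: constructing the filtration $\{V_k\}$ and verifying $T(V_k) \subseteq V_{k-1}$ at each order requires carefully tracking the higher-order expansions of the commutation, which also involve unknown higher derivatives of $G_\Psi$ at $\ch(A)$; restricting the auxiliary directions to $\ker(d\ch|_A)$ is precisely what cleanses those relations to isolate $T$.
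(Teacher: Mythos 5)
Your argument for the non-derogatory case is essentially the paper's: via the companion matrix, $d\ch|_A$ is surjective, so $G_\Psi'(\ch(A))$ fixes all of $\C^n$ and therefore equals $\I$, and the Cartan--Wu--Kobayashi uniqueness theorem on the taut manifold $\sym{n}$ finishes that case.

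The filtration argument you propose for the diagonalizable case has a genuine gap: the inductive claim $T(V_k)\subseteq V_{k-1}$ does not survive past low order, because composition with $\Psi$ on the inside injects perturbation directions lying \emph{outside} $K:=\ker(d\ch|_A)$. Concretely, set $a:=\ch(A)$ and $\Psi_j:=\Psi^{(j)}(A)(Y,\dots,Y)$ (the $j$-fold evaluation). Taking $Y\in K$ and comparing the $t^4$-coefficients of $\ch(\Psi(A+tY))=G_\Psi(\ch(A+tY))$ gives
\begin{align*}
T\bigl(d^4\ch|_A(Y,Y,Y,Y)\bigr)\;=\;&-d\ch|_A(\Psi_4)\,-\,4\,d^2\ch|_A(Y,\Psi_3)\,-\,3\,d^2\ch|_A(\Psi_2,\Psi_2)\\
&-\,6\,d^3\ch|_A(Y,Y,\Psi_2)\,+\,3\,G_\Psi''(a)\bigl(d^2\ch|_A(Y,Y),\,d^2\ch|_A(Y,Y)\bigr).
\end{align*}
Two terms here are fatal. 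The term $d^2\ch|_A(\Psi_2,\Psi_2)$ has \emph{neither} argument constrained to lie in $K$, so it lies in no $V_j$ over which you have control (requiring at least one $K$-argument in $V_2$ is exactly what makes $T(V_2)\subseteq V_1$ hold; dropping that requirement re-introduces an uncontrolled $G_\Psi''$-term already at order two). And the $G_\Psi''(a)$-term no longer vanishes, since $d^2\ch|_A(Y,Y)\ne 0$ in general. Thus $T(V_4)\subseteq V_3$ fails and the nilpotence of $T$ is not established. There is also a second, independent issue: openness of $\sym{n}$ guarantees that the \emph{unrestricted} Taylor jets of $\ch$ at $A$ span $\C^n$, but not that the jets with at least one argument confined to $K$, together with $V_1$, do so.

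The paper avoids both difficulties by an entirely different computation. It factors $\ch=\tau\circ\theta$ on a polydisc around $A$ (Lemma~\ref{L:localdecom}), so $G_\Psi$ is locally conjugate to a block map $F_\Psi$ (Lemma~\ref{L:princcommdgm}), and then proves ${\rm trace}\,F_\Psi'(\theta(A))=n$ by invoking Sun's quantitative perturbation bound for normal matrices (Result~\ref{Res:sun}, Proposition~\ref{P:pincpropdiag}), reducing to a diagonal $A$ with a zero eigenvalue via the translation trick of Subsection~\ref{SS:implemma}. Since $\sigma(G_\Psi'(a))\subset\overline{\D}$ by tautness (Result~\ref{Res:CCT}), trace $n$ forces every eigenvalue to equal $1$, and the iteration theory (Corollary~\ref{Cor:unitspacetangspaceoflimitmani}, Result~\ref{Res:iterconv}) then makes $G_\Psi$ the identity. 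The normal-matrix perturbation estimate is precisely the ingredient your filtration route is missing.
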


\begin{remark}\label{Rem:dense}
Recall that a non-derogatory matrix is a matrix for which the characteristic polynomial and the minimal polynomial are 
same, see \cite[p.195]{hornJohn:matanal85} for other equivalent definitions. 
Observe that the set of diagonalizable matrices is dense in $\Sn$, and the set of non-derogatory matrices is open 
and dense in $\Sn$ for any domain $\OM$ in $\C$. Furthermore, given any $A\in S_2(\OM)$, it is either
a diagonalizable matrix or a non-derogatory matrix. Hence, when $n=2$, the condition on the matrix $A$ in Theorem~\ref{T:mainT1}
is superfluous.
\end{remark}

As an application of Theorem~\ref{T:mainT1}, we prove a result that
gives stronger conclusion in a neighbourhood of the matrix $A$ than that of Theorem~\ref{T:mainT1}.

\begin{corollary}\label{cor:locconj}
Let $n,\,\OM,\,\Psi$ and $A$ be as in Theorem~\ref{T:mainT1}. Then there exists
a neighbourhood $\mathcal{N}$ of
$A$ such that $\Psi(W)$ is conjugate to $W$ for any $W\in\mathcal{N}$.
\end{corollary}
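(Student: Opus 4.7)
The plan is to start with Theorem~\ref{T:mainT1}, which tells us that $\ch(\Psi(W))=\ch(W)$ for every $W\in \Sn$, and then upgrade this identity of characteristic polynomials to an actual conjugacy $\Psi(W)\sim W$ in a suitable neighbourhood of $A$ by exploiting the local structure of the conjugation action of $GL_n(\C)$ on $M_n(\C)$. The two hypotheses on $A$ will be treated separately.

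The non-derogatory case is quick. Since the set of non-derogatory matrices in $M_n(\C)$ is open (see Remark~\ref{Rem:dense}) and contains $A$, I can choose an open neighbourhood $\mathcal{N}_0$ of $A$ consisting of non-derogatory matrices. Continuity of $\Psi$ together with $\Psi(A)=A$ then lets me shrink to an open neighbourhood $\mathcal{N}\subset \mathcal{N}_0$ on which $\Psi$ also takes only non-derogatory values. For any $W\in\mathcal{N}$, Theorem~\ref{T:mainT1} yields $\ch(\Psi(W))=\ch(W)$; and since the rational canonical form of a non-derogatory matrix is completely determined by its characteristic polynomial (there is exactly one Jordan block per distinct eigenvalue, of size equal to the algebraic multiplicity), this forces $\Psi(W)\sim W$.

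In the diagonalizable case the above openness argument breaks down whenever $A$ is actually derogatory, since one cannot guarantee that $\Psi(W)$ remains non-derogatory. My plan is to perform a holomorphic block-diagonalization. After conjugating $\Psi$, I may assume $A=\mathrm{diag}(\lam_1 \I_{m_1},\dots,\lam_k \I_{m_k})$ with the $\lam_i$'s distinct. For $W$ in a small enough neighbourhood of $A$, standard spectral projection theory provides a holomorphic $W\mapsto S(W)\in GL_n(\C)$ with $S(A)=\I$ such that $S(W)^{-1}W\,S(W)=\bigoplus_{i=1}^k W^{(i)}(W)$, with each $W^{(i)}(W)\in M_{m_i}(\C)$ of spectrum clustered near $\lam_i$ and $W^{(i)}(A)=\lam_i \I_{m_i}$. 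Applying the same procedure to $\Psi(W)$ produces blocks $Y^{(i)}(W)$; and since Theorem~\ref{T:mainT1} matches characteristic polynomials globally, $W^{(i)}(W)$ and $Y^{(i)}(W)$ share a common characteristic polynomial for each $i$. The conjugacy $\Psi(W)\sim W$ is then equivalent to the block-wise conjugacies $Y^{(i)}(W)\sim W^{(i)}(W)$ in $GL_{m_i}(\C)$.

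The hard part is establishing this block-wise conjugacy. In each block both matrices lie close to the scalar $\lam_i \I_{m_i}$ and have matching characteristic polynomials; what remains is to rule out the possibility that they lie in different Jordan strata of the common characteristic-polynomial fibre. The essential input is the second-order smallness $\Psi(W)=W+O(\|W-A\|^2)$ coming from $\Psi'(A)=\I$, which, combined with the linear-in-$\|W-A\|$ scaling of the distances between the various Jordan strata inside the characteristic-polynomial fibre through $W$, is meant to prevent $\Psi$ from moving $W$ off its own $GL_n(\C)$-orbit. Turning this quantitative picture into a rigorous argument\,---\,in particular controlling the derogatory $W$'s near $A$, where the openness reasoning of the first case is simply unavailable\,---\,is the most technical part of the proof.
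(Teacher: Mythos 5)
Your handling of the non-derogatory case is correct and even elementary, avoiding the external result the paper invokes: the set of non-derogatory matrices is open and contains $A=\Psi(A)$, so on a small enough neighbourhood both $W$ and $\Psi(W)$ are non-derogatory, and a non-derogatory matrix is determined up to conjugacy by its characteristic polynomial. The diagonalizable case, however, contains a genuine gap, and the quantitative heuristic you lean on is in fact false as stated. You posit that within the characteristic-polynomial fibre through $W$, the Jordan strata other than the one containing $W$ sit at distance at least a constant times $\|W-A\|$. But those orbits are nested by closure, and the \emph{more} generic strata accumulate onto the less generic ones. Concretely, with $A=0\in M_3(\C)$ and $W$ a rank-one nilpotent of norm $\delta$, there are rank-two nilpotents (same characteristic polynomial $t^3$, different conjugacy class) at arbitrarily small distance from $W$; the separation is $0$, not comparable to $\delta$. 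Thus the bound $\|\Psi(W)-W\|=O(\|W-A\|^2)$ only rules out $\Psi$ pushing $W$ to a \emph{more} degenerate Jordan type (which would require moving distance comparable to $\delta$), and does nothing to prevent a jump to a \emph{less} degenerate type. That second direction is exactly the hard one, and it is not addressed.

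The paper's own proof is short, uniform in $A$, and uses an input your sketch never exploits: since $\Psi'(A)=\I$, the inverse function theorem makes $\Psi$ a biholomorphism from a neighbourhood $\mathcal{N}$ of $A$ onto $\Psi(\mathcal{N})$, and then Result~\ref{Res:BariRansford} (Baribeau--Ransford) applies directly, since it asserts that any spectrum-preserving $\mathcal{C}^1$-diffeomorphism of an open set onto its image sends every matrix to a conjugate of itself. The local-diffeomorphism property is precisely what forbids the ``promotion'' of $W$ to a less degenerate Jordan type (an open injective map cannot push a lower-dimensional stratum into the already full-dimensional generic stratum without displacing the latter), and encoding this is the real content. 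To repair your argument in the diagonalizable case you would need either to invoke Baribeau--Ransford, as the paper does, or effectively to reprove a version of it, which goes well beyond the distance estimate you sketch.
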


\begin{remark}\label{Rm:commentransfordwhite}
Note that without any condition on $\Omega$ in Theorem~\ref{T:mainT1}, 
$\Psi$ need not be spectrum-preserving. For example: let $\Omega=\C\setminus \{0\}$
and consider $\Psi(W):= \exp(W- \I)$. Notice that
$\Psi$ satisfies $\Psi(\I)=\I$ and $\Psi'(\I)=\I$
but $\Psi$ is not spectrum-preserving. When $\OM=\D$ and $A=0\in S_n(\D)$, $n\geq 2$, 
the above theorem was proved by Ransford--White \cite[Theorem~3]{Ransford-White: holselfspectunitball91}.
Since the automorphism group of $S_n(\D)$ is far from being transitive (see
\cite[Theorem~4]{Ransford-White: holselfspectunitball91}), one cannot use the result due to Ransford--White to 
deduce the conclusion of the above theorem
for $S_n(\D)$ and for an arbitrary $A \neq 0$.%
\end{remark}

We must mention that our proof of Theorem~\ref{T:mainT1} is not a routine extension of the argument
given by Ransford--White in the case mentioned above. 
We provide a very short sketch of the proof of Theorem~\ref{T:mainT1} to point out some features of it
that are novel.
Let $\Psi$ be as in Theorem~\ref{T:mainT1} and $G_{\Psi}$ be the self-map of $\Sigma^n(\Omega)$ associated with
$\Psi$ (see Figure~\ref{F:comd1}).
Observe that if we show that
$G_{\Psi}$ is the identity map on $\Sigma^n(\Omega)$ then Theorem~\ref{T:mainT1} follows.
To study the set of fixed points of $G_{\Psi}$,
we introduce the technique of local decomposition of the map $\ch$,
which is particularly useful when the matrix $A$ is non-zero.
\begin{itemize}[leftmargin=14pt]
\item Let $\lam_i$ be an eigenvalue of $A$ with algebraic multiplicity 
$n_i$, $i\in\{1,2, \dots, m\}$. Then the map $\ch$
decomposes {\bf locally} as $\ch\,=\,\tau\circ\theta$, where $\theta$ is a map into the cartesian product of 
$\sym{n_i}$, i.e., $\prod_{i=1}^m\sym{n_i}$ and $\tau$ is the canonical map from $\prod_{i=1}^m\sym{n_i}$ 
onto $\sym{n}$. 
Furthermore, we observe that $\tau$ is a local biholomorphism. This allows us to define 
the holomorphic map $F_{\Psi}$\,---\,in a neighbourhood of $\theta(A)$\,---\,which is locally a 
biholomorphic conjugate of $G_{\Psi}$ via the map $\tau$ (see Figure~\ref{F:comd2}).
Note this step is independent of the choice of the matrix $A$. 
\smallskip

\item When $A$ is diagonalizable, using a result on
the perturbation of eigenvalues of a normal matrix by Sun \cite{Sun:Varspectnormal96}, 
we prove that the trace of $F'_{\Psi}$ at $\theta(A)$ is $n$, where 
$F_{\Psi}$ is as above. This implies that the trace of $G'_{\Psi}$ at $\ch(A)$ is $n$.
On the other hand, when $A$ is non-derogatory, we 
explicitly construct a right inverse of the map $\ch$ passing through the point $A$ which, in particular, shows that $G'_{\Psi}(\ch(A))=\I$.
\smallskip

\item Under the cardinality condition  $\#(\C\setminus \Omega) \geq 2n$, the domain $\Sigma^n(\OM)$ is Kobayashi complete. 
We appeal to results from the
iteration theory of holomorphic self-maps on taut complex manifolds (see Section~\ref{S:Basicprelim})\,---\,together with the information about 
$G'_{\Psi}(\ch(A))$ above\,---\,to establish that $G_{\Psi}$ is the identity map on $\Sigma^n(\Omega)$.
\end{itemize}

\begin{remark}
Sun's result\,---\,alluded to as above\,---\,gives a bound on the eigenvalues for the perturbation of a normal matrix
(see Result~\ref{Res:sun} for the statement). In general, similar bounds on eigenvalues for perturbation of an arbitrary matrix
have robust error; see, for instance, \cite{Song:varspecarbmat02}, \cite{Bhatia:Perboundmateigen07}.
Further, given a matrix $B$ if there exists a {\bf local} right inverse of the map $\ch$ passing through the point $B$, then $B$ has to be a non-derogatory 
matrix.  Therefore, the techniques used to prove Theorem~\ref{T:mainT1} do not extend.
\end{remark}

We now turn to the case when the matrix $A$ in Theorem~\ref{T:mainT1} is not necessarily diagonalizable or non-derogatory. 
Notice, if we take derivatives on both sides of $\ch\circ\Psi\,=\,G_{\Psi}\circ\ch$ at $A$ then it follows that 
the range space of the derivative of $\ch$ at $A$ lies in the eigenspace of 
the derivative of $G_{\Psi}$ at $\ch(A)$ corresponding to the eigenvalue $1$. 
This eigenspace plays an important role with regards to the spectrum-preserving property of $\Psi$. 
Now, the rank of $\ch$ at $A$ gives a lower bound on the dimesion of the eigenspace of 
$G'_{\Psi}(\ch(A))$ corresponding to the eigenvalue $1$. In this direction, we have the following proposition,
which is interesting in its own right:

\begin{proposition}\label{P:rankofcharmap}
Let $A\in M_n(\C)$ be given. Then the rank of the derivative of $\ch$ at $A$ is equal to
the degree of the minimal polynomial of $A$.
\end{proposition}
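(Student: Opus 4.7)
The plan is to replace $\ch$ by the equivalent power-sum map and then exploit the non-degeneracy of the trace pairing on $M_n(\C)$. Define $p : M_n(\C) \to \C^n$ by $p(W) := (\operatorname{tr}(W), \operatorname{tr}(W^2), \ldots, \operatorname{tr}(W^n))$. Newton's identities,
\[
p_k = c_1 p_{k-1} - c_2 p_{k-2} + \cdots + (-1)^{k-2} c_{k-1} p_1 + (-1)^{k-1} k\, c_k, \qquad 1 \leq k \leq n,
\]
yield inductively a polynomial map $\Phi : \C^n \to \C^n$ with $p = \Phi \circ \ch$, and the Jacobian of $\Phi$ at every point is lower triangular with nonzero diagonal entries $(-1)^{k-1} k$. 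Hence $(d\Phi)_q$ is invertible for each $q \in \C^n$, and by the chain rule $\operatorname{rank}(d\ch)_A = \operatorname{rank}(dp)_A$. This reduces the problem to computing the rank of $(dp)_A$.

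Next, expanding $(A + tB)^k = A^k + t\sum_{j=0}^{k-1} A^j B A^{k-1-j} + O(t^2)$ and applying cyclicity of the trace gives
\[
(dp)_A(B) = \bigl(k\,\operatorname{tr}(A^{k-1} B)\bigr)_{k=1}^{n}, \qquad B \in M_n(\C).
\]
Stripping the invertible component-wise scaling by $(1,2,\ldots,n)$, I find that $\operatorname{rank}(dp)_A$ equals the dimension of the span, in $M_n(\C)^*$, of the functionals $\phi_k := \operatorname{tr}(A^{k-1}\,\cdot\,)$ for $1 \leq k \leq n$. Since the bilinear form $(X,Y) \mapsto \operatorname{tr}(XY)$ is non-degenerate on $M_n(\C)$, the map $X \mapsto \operatorname{tr}(X\,\cdot\,)$ is an isomorphism $M_n(\C) \to M_n(\C)^*$, and so
\[
\operatorname{rank}(d\ch)_A \;=\; \dim \operatorname{span}\bigl\{\I,\, A,\, A^2,\, \ldots,\, A^{n-1}\bigr\}.
\]

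To finish, I would invoke the defining property of the minimal polynomial: setting $d := \deg \minpo{A}$, the matrices $\I, A, \ldots, A^{d-1}$ are linearly independent (by minimality of $\minpo{A}$), while $\minpo{A}(A) = 0$ expresses $A^d, A^{d+1}, \ldots, A^{n-1}$ as linear combinations of them. Thus the span has dimension exactly $d$, yielding $\operatorname{rank}(d\ch)_A = \deg \minpo{A}$. The only point requiring vigilance is the reduction from $\ch$ to $p$: one must verify the lower-triangular structure of the Newton map's Jacobian, which is what permits the rank to be preserved. Once that is in hand, the remainder is essentially a combination of trace-form non-degeneracy and the definition of the minimal polynomial.
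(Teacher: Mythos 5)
Your proof is correct and takes a genuinely different, considerably more streamlined route than the paper. The paper reduces to the nilpotent case (Lemma~\ref{L:ranknil}) by placing $A$ in Jordan canonical form and tracking how perturbations $E_{j,k}$ affect the characteristic polynomial, then proves translation invariance (Lemma~\ref{L:rankinv}), and finally glues the per-eigenvalue contributions together via the local decomposition $\ch = \tau \circ \theta$ of Subsection~\ref{SS:locdecochi}; the result is a somewhat delicate block-by-block bookkeeping argument. You instead bypass Jordan forms and local decompositions entirely by composing with the Newton map $\Phi$, whose Jacobian is lower triangular with nonzero diagonal $(-1)^{k-1}k$, to replace $\ch$ with the power-sum map $p$; the derivative $(dp)_A(B) = (k\,\operatorname{tr}(A^{k-1}B))_{k=1}^n$ is then computed in one line via cyclicity of the trace, and non-degeneracy of the trace pairing converts the rank computation into $\dim\operatorname{span}\{\I, A, \ldots, A^{n-1}\}$, which is the degree of the minimal polynomial by definition. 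Your argument is self-contained, coordinate-free, and avoids the case analysis in the paper; the paper's route has the compensating advantage that the local decomposition $\ch = \tau\circ\theta$ and the perturbation analysis are reused elsewhere (notably in Proposition~\ref{P:pincpropdiag}), so proving Proposition~\ref{P:rankofcharmap} that way costs little extra once that machinery is in place. One small point to keep explicit in a final write-up: the fact that $\operatorname{rank}(dp)_A$ equals $\dim\operatorname{span}\{\phi_1,\dots,\phi_n\}$ in $M_n(\C)^*$ follows from rank-nullity applied to $B\mapsto(\phi_1(B),\dots,\phi_n(B))$, whose kernel is $\bigcap_k\ker\phi_k$; you gesture at this but it deserves a sentence.
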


Our proof of Proposition~\ref{P:rankofcharmap} crucially uses the local decomposition of $\ch$ as described before. 
By a result of Vigu\'{e} \cite{Vigue:FPlhm} (see Section~\ref{S:Basicprelim}) about the fixed point set of holomorphic self-maps,
the eigenspace of $G'_{\Psi}(\ch(A))$ corresponding to the eigenvalue $1$ 
determines the fixed-point set of $G_{\Psi}$.
Using Proposition~\ref{P:rankofcharmap}\,---\,in a way that is described in the last paragraph\,---\,we get a lower 
bound on the dimesion of the fixed-point set of $G_{\Psi}$ which leads to the second main result of this article.

\begin{theorem}\label{T:fixedpoint}
Given $n\in\nat$, $n\geq 2$, and a domain $\Omega \subset \C$ satisfying $  \#(\C\setminus \Omega) \geq 2n$. 
Let $\Psi$ be a holomorphic self-map of $\Sn$ such that $\Psi(A)=A$ and $\Psi'(A)=\I$. Then 
there is a closed complex submanifold $\mathscr{S}$ of $\Sigma^n(\Omega)$
containing $\ch(A)$ of complex dimension greater than or equal to the degree of the minimal polynomial
of $A$ such that for every $W\in{\ch}^{-1}(\mathscr{S})$ we have $\ch(\Psi(W))=\ch(W)$.
\end{theorem}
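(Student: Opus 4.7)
The plan is to combine the commutative-diagram picture from Figure~\ref{F:comd1}, Proposition~\ref{P:rankofcharmap} on the rank of $\ch$, and Vigu\'e's theorem on fixed-point sets of holomorphic self-maps of taut manifolds. Since $\#(\C\setminus\Omega)\geq 2n$, by Result~\ref{Res:kobhypsymprod} the domain $\Sigma^n(\Omega)$ is Kobayashi hyperbolic and Kobayashi complete, and hence taut. By the discussion preceding Figure~\ref{F:comd1} (applied in Section~\ref{S:prelim_results}) there is an induced holomorphic self-map $G_\Psi$ of $\Sigma^n(\Omega)$ satisfying $\ch\circ\Psi=G_\Psi\circ\ch$, and because $\Psi(A)=A$ we have $G_\Psi(\ch(A))=\ch(A)$.

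I would then differentiate the identity $\ch\circ\Psi=G_\Psi\circ\ch$ at $A$. Using $\Psi'(A)=\I$, this gives $\ch'(A)=G'_\Psi(\ch(A))\circ\ch'(A)$, so that the range of $\ch'(A)$ is contained in the eigenspace
\[
E_1\,:=\,\ker\bigl(G'_\Psi(\ch(A))-\I\bigr)
\]
corresponding to the eigenvalue $1$. By Proposition~\ref{P:rankofcharmap}, $\dim\mathrm{Range}(\ch'(A))$ equals the degree $d$ of the minimal polynomial of $A$, and therefore $\dim E_1\geq d$.

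Now I appeal to Vigu\'e's theorem (recalled in Section~\ref{S:Basicprelim}): for a holomorphic self-map of a taut complex manifold having a fixed point $p$, the fixed-point set is a closed complex submanifold whose tangent space at $p$ is precisely the eigenspace of the derivative at $p$ corresponding to the eigenvalue $1$. Applying this to $G_\Psi$ and the fixed point $\ch(A)\in\Sigma^n(\Omega)$, I obtain a closed complex submanifold $\mathscr{S}\subset\Sigma^n(\Omega)$ containing $\ch(A)$ (take, for instance, the connected component of $\mathrm{Fix}(G_\Psi)$ through $\ch(A)$) with $T_{\ch(A)}\mathscr{S}=E_1$, hence $\dim_{\C}\mathscr{S}\geq d$.

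Finally, for any $W\in\ch^{-1}(\mathscr{S})$, since $\ch(W)\in\mathscr{S}\subset\mathrm{Fix}(G_\Psi)$, the commutative diagram yields $\ch(\Psi(W))=G_\Psi(\ch(W))=\ch(W)$, which is exactly the conclusion. The only real work in this argument is the input Proposition~\ref{P:rankofcharmap}; once the rank statement is in hand, the tautness of $\Sigma^n(\Omega)$ and Vigu\'e's theorem furnish the submanifold structure with the required dimension bound essentially for free.
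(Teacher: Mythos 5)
Your proof is correct and follows the paper's argument essentially verbatim: induce $G_\Psi$ via Lemma~\ref{L:indmapsymprod}, differentiate $G_\Psi\circ\ch=\ch\circ\Psi$ at $A$ to show the range of $\ch'(A)$ lies in the eigenspace of $G'_\Psi(\ch(A))$ for eigenvalue $1$, apply Proposition~\ref{P:rankofcharmap} for the dimension bound, and invoke Vigu\'e's result to realize $\mathrm{Fix}(G_\Psi)$ as the desired submanifold. Taking the connected component through $\ch(A)$ is a harmless cosmetic refinement of the paper's choice $\mathscr{S}=\mathrm{Fix}(G_\Psi)$.
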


Since for a non-derogatory matrix the degree of the minimal polynomial is maximal, Theorem~\ref{T:fixedpoint} 
gives an alternate proof of Theorem~\ref{T:mainT1} when $A$ is non-derogatory.
We prove Theorem~\ref{T:mainT1} and Theorem~\ref{T:fixedpoint} in Section~\ref{S:diagcase} and 
Section~\ref{S:nonderog} respectively while the proof of Proposition~\ref{P:rankofcharmap} is given in 
Section~\ref{S:rankcomp}.
\medskip

\noindent{{\bf{Concluding remarks:}}} For a domain $\OM$ with $\#(\C\setminus\OM)\geq 2n$, the map
$\Psi$ as in Theorem~\ref{T:mainT1} is spectrum-preserving when the matrix $A$ belongs to a large subset of 
$\Sn$ (see Remark~\ref{Rem:dense}). Thus it seems that the same conclusion should hold for any choice of $A$, but current 
tools and results are not enough to conclude this. For example,
when $n=3$, there is one particular choice of the matrix $A$ for which we are not able to say
whether $\Psi$ is spectrum-preserving (see Section~\ref{S:nonderog}). 
It would be interesting to find a counterexample in this case.

\section{Kobayashi hyperbolicity and iteration theory on taut complex manifolds}\label{S:Basicprelim}

In this section, we recall notions of Kobayashi hyperbolicity, Kobayashi completeness and tautness for a given complex manifold. 
As hinted in Section~\ref{S:intro}, we shall need 
results from the iteration theory of holomorphic self-maps on taut complex manifolds in our proofs, so we state those results too in this section.
Before we begin, a piece of notation\,---\,given complex manifolds $\mfd{X}$ and $\mfd{Y}$, we shall denote by 
$\hol(\mfd{X},\,\mfd{Y})$ the set of all holomorphic maps from $\mfd{X}$ into $\mfd{Y}$.
\smallskip

Let $\mfd{X}$ be a complex manifold and let ${\sf h}$ denote the hyperbolic distance induced by
the Poincar\'{e} metric on the unit disc $\D$. 
The {\em Kobayashi pseudo-distance} $K_{\mfd{X}}:\mfd{X}\times \mfd{X}\lrarw [0,\,\infty)$ is defined by:
given two points $p,q\in\mfd{X}$,
\[
K_{\mfd{X}}(p, q):=\inf\Big\{\sum_{i=1}^k {\sf h}(\zt_{i-1},\zt_i)\,:\,(\phi_1,\dots,\phi_k;\zt_0,\dots,\zt_k)\in
\mathcal{A}(p, q)\Big\},
\]
where $\mathcal{A}(p, q)$ is the set of all analytic chains in $\mfd{X}$ joining $p$ to $q$. Here,
$(\phi_1,\dots,\phi_k;\zt_0,\dots,\zt_k)$ is an {\em analytic chain} in $\mfd{X}$ joining $p$ to $q$ if 
$\phi_i\in\hol(\D,\,\mfd{X})$ for each $i$ such that
\[
p=\phi_1(\zt_0), \ \ \phi_k(\zt_k)=q \ \ \text{and} \ \ \phi_i(\zt_i)=\phi_{i+1}(\zt_i)
\]
for $i=1,\dots,k-1$.
\smallskip

It is not difficult to check that $K_{\mfd{X}}$ is a pseudo-distance. Using the Schwarz lemma on the unit disc $\D$,
we see that $K_{\D}\equiv{\sf h}$. An important property of the Kobayashi pseudo-distance is its contractivity under holomorphic maps, i.e.,
if $F: \mfd{X}\lrarw \mfd{Y}$ is a holomorphic map then
$K_{\mfd{Y}}\big(F(p), F(q)\big)\leq K_{\mfd{X}}(p, q)$
for all $p,q\in \mfd{X}$.
A complex manifold $\mfd{X}$ is called {\em Kobayashi hyperbolic} if the pseudo-distance $K_{\mfd{X}}$ is 
a distance, i.e., $K_{\mfd{X}}(p, q)=0$ if and only if $p=q$. Furthermore, $\mfd{X}$ is called {\em Kobayashi complete} if it is Kobayashi 
hyperbolic and the metric space $(\mfd{X},\,K_\mfd{X})$ is complete. It is a fact that
every bounded domain in $\C^d$ is Kobayashi hyperbolic.
On the other hand, it is easy to check that $K_{\C^d}\equiv 0$ for all $d\geq 1$.
We refer the interested reader to \cite{Kobayashi:Hypcomplex98} (also see \cite[Chapter~3]{JP13:jarnicki2013invariant}) for a comprehensive 
account on Kobayashi pseudo-distance.
Now, we recall the following generalization of 
Liouville's theorem.
\begin{result}\label{Res:LT}
Let $\mfd{X}$ be a Kobayashi hyperbolic complex manifold and let $F:\C^d\lrarw\mfd{X}$ be a holomorphic map. Then $F$ is a constant function.
\end{result}

We are now in a befitting position to present
\begin{proof}[The proof of Lemma~\ref{L:tautkobanot}]
{\bf Fix} $\Omega \subset \C$ and $n \geq 2$. 
Now consider a point $W\in\Sn$.
Let $D$ be a diagonal matrix such that $\ch(D)=\ch(W)$. We know that there exists $C\in M_n(\C)$ and a
strictly upper triangular matrix $U$ such that
\[
W=\exp(-C)\,(D+U)\,\exp(C).
\]
Now consider the map $f:\C\lrarw M_n(\C)$ defined by 
\[
f(\zt):=\exp(-C\,\zt)\,(D+\zt\,U)\,\exp(C\,\zt) \ \ \ \forall\zt\in
\C.
\]
Note that $\ch(f(\zt))=\ch(D+\zt\,U)=\ch(D)$, hence $f(\C)\subset\Sn$. Since $f$ is a non-constant holomorphic map into $\Sn$,
by Result~\ref{Res:LT}, $\Sn$ cannot be Kobayashi hyperbolic.
\end{proof}

Recall, a complex manifold $\mfd{X}$ is called {\em taut} if every 
sequence in $\hol(\D,\,\mfd{X})$ either has a convergent subsequence or a compactly divergent subsequence. 
It is a fact that every Kobayashi complete complex manifold is taut and every taut complex manifold is Kobayashi hyperbolic;
the converse of both these facts do not hold.
We now state the relevant results from the iteration theory of holomorphic self-maps on taut complex manifolds that we need later.
Most of the material presented here is taken from Chapter~2.1 and Chapter~2.4 in 
Abate \cite{Abate:IterationTheory89} (also, see Kobayashi \cite{Kobayashi:Hypcomplex98}). We begin with stating
a result that is due to Wu \cite[Theorem~C]{Wu:Normfamily67}.

\begin{result}\label{Res:CCT}
Let $\mfd{X}$ be a taut complex manifold and let $f\in\hol(\mfd{X},\,\mfd{X})$ be such that $f(z_0)=z_0$ for some $z_0\in \mfd{X}$. 
Then:
\begin{itemize}
\item[$a)$] the spectrum of the derivative of $f$ at $z_0$, $f'(z_0)$, is contained in $\overline{\D}$. 
\smallskip

\item[$b)$] $f'(z_0)=\I$ if and only if $f$ is the identity function.
\smallskip

\item[$c)$] The tangent space $T_{z_0}\mfd{X}$ admits a $f'(z_0)$-invariant splitting $T_{z_0}\mfd{X}=L_{N}\oplus L_U$ such that the spectrum of 
$f'(z_0)\big{|}_{L_N}$ is contained in $\D$, the spectrum of $f'(z_0)\big{|}_{L_U}$
is contained in $\partial\,\D$ and $f'(z_0)\big{|}_{L_U}$ is diagonalizable.
\end{itemize}
\end{result}
\noindent The subspace $L_{U}$ of $T_{z_0}\mfd{X}$ is called the {\em unitary space} of $f$ at the fixed point $z_0$ and the subspace $L_N$ is called
the {\em nilpotent space} of $f$ at $z_0$. 
\smallskip

Before we state the next result, we need a definition. Let $\mfd{X}$ be a complex manifold.
A {\em holomorphic retraction} of $\mfd{X}$ is a holomorphic map $\rho:\mfd{X}\lrarw\mfd{X}$
such that $\rho^2 = \rho$. A {\em holomorphic retract} of $\mfd{X}$ is the image of $\mfd{X}$ under a
holomorphic retraction. It is known that any holomorphic retract of $\mfd{X}$ is a closed complex submanifold
of $\mfd{X}$. We now state
\begin{result}\label{Res:existretract}
Let $\mfd{X}$ be a taut complex manifold and $f\in\hol(\mfd{X},\,\mfd{X})$. Assume that the sequence
$\{f^k\}$ of iterates of $f$ is not compactly divergent. Then there exist a complex submanifold $\mfd{M}$
of $\mfd{X}$ and a holomorphic retraction $\rho:\mfd{X}\lrarw\mfd{M}$ such that every limit point
$h\in\hol(\mfd{X},\,\mfd{X})$ of $\{f^k\}$ is of the form
\[
h =\gamma\circ\rho,
\]
where $\gamma$ is an automorphism of $\mfd{M}$. Moreover, even $\rho$ is a limit point of 
the sequence $\{f^k\}$.
\end{result}
The above result is due to Abate \cite{Abate:ISGTM88}. The manifold $\mfd{M}$ above is called the
{\em limit manifold} of $f$ and its dimension is called the 
{\em limit multiplicity} of $f$. 
If $f\in\hol(\mfd{X},\,\mfd{X})$ be such that $f(z_0)=z_0$ for some $z_0\in\mfd{X}$, $\mfd{X}$ being taut,
then following is an easy consequence of the above results:

\begin{corollary}\label{Cor:unitspacetangspaceoflimitmani}
Given $f\in\hol(\mfd{X},\,\mfd{X})$ with $f(z_0)=z_0$ for some $z_0\in\mfd{X}$ and $\mfd{X}$ being a
 taut complex manifold, the unitary space of $f$ at $z_0$ is the tangent 
space at $z_0$ of the limit manifold of $f$. In particular, the limit multiplicity of $f$ is the number of
 eigenvalues of $f'(z_0)$ that belong to $\partial\,\D$ counted with multiplicity.
\end{corollary}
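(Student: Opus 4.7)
The plan is to invoke Result~\ref{Res:existretract} and then extract the conclusion by comparing the linearization of a limiting retraction at $z_0$ with the invariant splitting supplied by Result~\ref{Res:CCT}(c). First, because $f^k(z_0)=z_0$ for every $k$, the iterate sequence $\{f^k\}$ is not compactly divergent, so Result~\ref{Res:existretract} yields a holomorphic retract $\mfd{M}\subset\mfd{X}$ and a retraction $\rho\colon\mfd{X}\lrarw\mfd{M}$ that is itself the limit of some subsequence $f^{k_j}\to\rho$. In particular $\rho(z_0)=z_0$, hence $z_0\in\mfd{M}$. Differentiating at $z_0$ gives $\bigl(f'(z_0)\bigr)^{k_j}\to P:=\rho'(z_0)$, and the identity $\rho^2=\rho$ makes $P$ a linear projection with image $T_{z_0}\mfd{M}$.

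Writing $T:=f'(z_0)$ and using the $T$-invariant splitting $T_{z_0}\mfd{X}=L_N\oplus L_U$ from Result~\ref{Res:CCT}(c), I would then show $L_N\subseteq\ker P$ and $L_U\subseteq\operatorname{im} P$. On $L_N$ the spectrum of $T$ lies in $\D$, so $T^{k_j}\big|_{L_N}\to 0$ and hence $L_N\subseteq\ker P$. On the invariant subspace $L_U$, $T\big|_{L_U}$ is diagonalizable with spectrum on $\partial\D$, so in an eigenbasis $T^{k_j}\big|_{L_U}$ is represented by a diagonal matrix whose entries lie on $\partial\D$; the limit $P\big|_{L_U}\colon L_U\to L_U$ is therefore also diagonal in this basis with entries on $\partial\D$, and in particular invertible. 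Since $P^2=P$ restricts to $(P\big|_{L_U})^2=P\big|_{L_U}$, an invertible idempotent, this forces $P\big|_{L_U}=\operatorname{id}_{L_U}$, so $L_U\subseteq\operatorname{im} P=T_{z_0}\mfd{M}$. A dimension count using $T_{z_0}\mfd{X}=\ker P\oplus\operatorname{im} P=L_N\oplus L_U$ now promotes both inclusions to equalities, yielding $L_U=T_{z_0}\mfd{M}$. The \emph{in particular} clause is then immediate: diagonalizability of $T\big|_{L_U}$ with spectrum on $\partial\D$, together with the fact that $T\big|_{L_N}$ contributes no eigenvalues on $\partial\D$, makes $\dim L_U$ exactly the count of eigenvalues of $f'(z_0)$ on $\partial\D$ with algebraic multiplicity.

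The key step, and the one I expect to need the most care, is the invertibility of $P\big|_{L_U}$. This depends on two properties supplied by Result~\ref{Res:CCT}(c): the $T$-invariance of $L_U$, which is what makes $P\big|_{L_U}$ a genuine self-map of $L_U$ (via $T^{k_j}(L_U)\subseteq L_U$ and closedness of $L_U$), and the diagonalizability of $T\big|_{L_U}$, which prevents the powers from shrinking in any direction and keeps all eigenvalues on the unit circle in the limit. Without diagonalizability one could not exclude the limit from being singular, and the inclusion $L_U\subseteq T_{z_0}\mfd{M}$ could fail.
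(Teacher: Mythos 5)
Your proof is correct, and it is precisely the argument the authors have in mind: the paper states the corollary as ``an easy consequence of the above results'' without supplying a proof, and yours is the standard one obtained by passing to derivatives in Result~\ref{Res:existretract}, using that $(f'(z_0))^{k_j}\to\rho'(z_0)$ and then matching the splitting of Result~\ref{Res:CCT}(c) against $\ker\rho'(z_0)\oplus\operatorname{im}\rho'(z_0)$. The one point you flag as delicate---invertibility of $P|_{L_U}$---is handled cleanly via diagonalizability and the closedness of $L_U$ under $T^{k_j}$, so no gap remains.
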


We also need a result due to Abate \cite{Abate:ISGTM88} that gives a characterization
for the sequence $\{f^k\}\in\hol(\mfd{X},\,\mfd{X})$ to be convergent.
\begin{result}\label{Res:iterconv}
Let $\mfd{X}$ be a taut complex manifold and let $f\in\hol(\mfd{X},\,\mfd{X})$. Then the sequence
of iterates $\{f^k\}$ converges in $\hol(\mfd{X},\,\mfd{X})$ if and only if $f$ has a fixed point $z_0\in\mfd{X}$ such
that the spectrum of $f'(z_0)$ is contained in $\D\cup\{1\}$.
\end{result}
We end this section with a result due to Vigu\'{e} about the fixed point set of a holomorphic self-map. 
Given $f\in\hol(\mfd{X},\,\mfd{X})$, we shall denote by ${\rm{Fix}}(f)$ the set of fixed points of $f$. 
\begin{result}[Vigu\'{e},\,\cite{Vigue:FPlhm}]\label{Res:Vig}
Let $\mfd{X}$ be a taut complex manifold, $f\in\hol(\mfd{X},\,\mfd{X})$. Then ${\rm{Fix}}(f)$
 is a closed complex submanifold of $\mfd{X}$.
Moreover, for $x\in {\rm{Fix}}(f)$, we have
\[
 T_{x}({\rm{Fix}}(f))\,=\,\{\xi\in T_{x}\mfd{X}\,:\,f'(x)\xi=\xi\}.
 \]
\end{result}
\noindent{Also, see \cite[Theorem~5.5.8]{Kobayashi:Hypcomplex98} for details.}

\section{Two preliminary lemmas}\label{S:prelim_results}
In this section, we state two closely related lemmas. Lemma~\ref{L:indmapsymprod} is one of the key tools in the proof of
the two main results of this paper. Both lemmas are simple once we appeal to a result by Zwonek. We begin by stating this result. 

\begin{result}[Zwonek, \cite{wz:ftpspcm20}]\label{Res:kobhypsymprod}
Let $\Omega\subset\C$ be a domain and let $n\in\nat$, $n\geq 2$, be fixed. If $\#(\C\setminus\Omega)\geq 2n$ then 
$\Sigma^n(\Omega)$ is Kobayashi complete. If $\#(\C\setminus \Omega)< 2n$ then $\Sigma^n(\Omega)$ 
contains a non-constant holomorphic image of $\C$ and thus $\Sigma^n(\Omega)$ is not Kobayashi hyperbolic. 
\end{result}

\begin{lemma}\label{L:prespectra}
Consider a domain $\Omega\subset\C$ and $n\in\nat$, $n\geq 2$, such that $\#(\C\setminus\Omega)\geq 2n$.
Let $\Psi$ be a holomorphic self-map of $\Sn$. Then for every 
$W_1,\,W_2\in\Sn$ such that $\ch(W_1)=\ch(W_2)$, we have $\ch(\Psi(W_1))=\ch(\Psi(W_2))$, where 
$\ch : \Sn\lrarw\Sigma^n(\Omega)$ is as defined in Section~\ref{S:intro}.
\end{lemma}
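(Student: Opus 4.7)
The plan is to connect $W_1$ and $W_2$ inside their common fiber of $\ch$ by entire curves, push these curves down via $\ch\circ\Psi$ to the Kobayashi hyperbolic target $\sym{n}$, and invoke the Liouville-type result to force constancy.

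First I would invoke the construction from the proof of Lemma~\ref{L:tautkobanot}: for every $W\in\Sn$, writing $W=\exp(-C)(D+U)\exp(C)$ with $D$ diagonal (whose diagonal entries list the eigenvalues of $W$ with algebraic multiplicity) and $U$ strictly upper triangular, the entire map
\[
f_W(\zt)\,:=\,\exp(-C\zt)\,(D+\zt U)\,\exp(C\zt)
\]
satisfies $\ch(f_W(\zt))\equiv\ch(D)=\ch(W)$, $f_W(1)=W$ and $f_W(0)=D$. Thus, for any matrix $W\in\Sn$ and any ordering of its eigenvalues, there is a non-constant entire curve in $\Sn$ from the associated diagonal matrix $D$ to $W$ along which $\ch$ is constant.

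Given $W_1,W_2\in\Sn$ with $\ch(W_1)=\ch(W_2)$, the two matrices share the same characteristic polynomial, hence share (up to reordering) the same list of eigenvalues with multiplicities. Choose a single diagonal matrix $D$ built from this common eigenvalue list, and apply the construction above to obtain entire curves $f_1,f_2:\C\lrarw\Sn$ with $f_i(0)=D$, $f_i(1)=W_i$ and $\ch\circ f_i\equiv\ch(W_i)$, for $i=1,2$. Now consider the compositions
\[
h_i\,:=\,\ch\circ\Psi\circ f_i\,:\,\C\lrarw\sym{n},\qquad i=1,2.
\]
Each $h_i$ is holomorphic, and under the standing assumption $\#(\C\setminus\OM)\geq 2n$, Result~\ref{Res:kobhypsymprod} of Zwonek guarantees that $\sym{n}$ is Kobayashi complete, hence Kobayashi hyperbolic. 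Therefore, by the generalized Liouville theorem (Result~\ref{Res:LT}), both $h_1$ and $h_2$ are constant maps.

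Evaluating at $0$ and $1$, we get $\ch(\Psi(W_i))=h_i(1)=h_i(0)=\ch(\Psi(D))$ for $i=1,2$, so $\ch(\Psi(W_1))=\ch(\Psi(W_2))$, as desired. No serious obstacle is anticipated: the only ingredient that must be set up carefully is the choice of a \emph{single} diagonal matrix $D$ that serves as a common basepoint for the two entire curves, which is possible precisely because $\ch(W_1)=\ch(W_2)$ means $W_1$ and $W_2$ have the same unordered spectrum with multiplicities.
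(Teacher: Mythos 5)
Your proposal is correct and follows essentially the same route as the paper: connect each $W_i$ to a common diagonal matrix $D$ by the entire curve $\zeta\mapsto\exp(-C\zeta)(D+\zeta U)\exp(C\zeta)$ lying in a fiber of $\ch$, push forward via $\ch\circ\Psi$ to the Kobayashi hyperbolic $\Sigma^n(\Omega)$, and apply the Liouville-type result to conclude constancy. The only (minor) difference is that you make explicit the choice of a single common basepoint $D$, whereas the paper fixes $D$ via $W_1$ and then notes the same argument applies to $W_2$; these are the same idea.
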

\begin{proof}
Fix $W_1,\,W_2\in\Sn$ such that $\ch(W_1)=\ch(W_2)$. We know that there exists $C\in M_n(\C)$ and a
strictly upper triangular matrix $U$ such that
$W_1=\exp(-C)\,(D+U)\,\exp(C)$,
where $D$ is a diagonal matrix such that $\ch(D)=\ch(W_1)$. Now consider the map $f:\C\lrarw M_n(\C)$ defined by 
\[
f(\zt):=\exp(-C\,\zt)\,(D+\zt\,U)\,\exp(C\,\zt) \ \ \text{for all $\zt\in\C$}.
\]
Note that $\ch(f(\zt))=\ch(D+\zt\,U)=\ch(D)$, hence $f(\C)\subset\Sn$. This allows us to define the map
$g(\zt):=\ch\circ \Psi\circ f(\zt)$ for all $\zt\in\C$. Note that $g$ is a holomorphic map from 
$\C$ to $\Sigma^n(\Omega)$.
Since $\#(\C\setminus\Omega)\geq 2n$, by Result~\ref{Res:kobhypsymprod}, it follows 
that $\Sigma^n(\Omega)$ is 
Kobayashi hyperbolic. Then using Result~\ref{Res:LT} we get that $g$ is a constant function.
Hence
\[
\ch(\Psi(D))=g(0)=g(1)=\ch(\Psi(W_1)).
\]
Proceeding similarly we get $\ch(\Psi(D))=\ch(\Psi(W_2))$ whence $\ch(\Psi(W_1))=\ch(\Psi(W_2))$.
Since the choice of $W_1,\,W_2\in\Sn$ (satisfying $\ch(W_1)=\ch(W_2)$) was arbitrary, the lemma follows.
\end{proof}

The above lemma is motivated from that of \cite[Theorem~1]{Ransford-White: holselfspectunitball91}
by Ransford--White. It also appeared in \cite{VC:CnonhomMatdom} but we present the proof here for 
completeness. Further, with the help of this lemma, we prove the following result:

\begin{lemma}\label{L:indmapsymprod}
Consider a domain $\Omega\subset\C$ and $n\in\nat$, $n\geq 2$, such that $\#(\C\setminus\Omega)\geq 2n$.
Let $\Psi$ be a holomorphic self-map of $\Sn$. Then there exists a unique holomorphic 
self-map $G_{\Psi}$ of $\sym{n}$ such that $G_{\Psi}\circ\ch=\ch\circ\Psi$ (also see Figure~\ref{F:comd1}). 
\end{lemma}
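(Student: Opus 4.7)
The plan is to bypass any delicate descent-through-quotient argument by producing an explicit global holomorphic right inverse of $\ch$. Given $c=(c_{1},\dots,c_{n})\in\Sigma^{n}(\Omega)$, let $\sigma(c)\in M_n(\C)$ be the companion matrix of the polynomial $t^{n}+\sum_{k=1}^{n}(-1)^{k}c_{k}\,t^{n-k}$; the entries of $\sigma(c)$ are polynomial (hence holomorphic) in the $c_{k}$'s, so $\sigma$ is a holomorphic map from $\C^{n}$ to $M_{n}(\C)$. By construction the roots of the characteristic polynomial of $\sigma(c)$ are exactly the roots of the associated polynomial, which lie in $\Omega$ whenever $c\in\Sigma^{n}(\Omega)$. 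Consequently $\sigma$ restricts to a holomorphic map $\sigma:\Sigma^{n}(\Omega)\lrarw\Sn$ with $\ch\circ\sigma=\mathrm{id}_{\Sigma^{n}(\Omega)}$.

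With this section in hand, I would simply define
\[
G_{\Psi}\,:=\,\ch\circ\Psi\circ\sigma:\Sigma^{n}(\Omega)\lrarw\Sigma^{n}(\Omega),
\]
which is manifestly holomorphic as a composition of holomorphic maps (note $\Psi\circ\sigma$ maps into $\Sn$ and $\ch$ takes $\Sn$ onto $\Sigma^{n}(\Omega)$). To verify the commutation relation $G_{\Psi}\circ\ch=\ch\circ\Psi$, fix $W\in\Sn$ and observe that $\ch\bigl(\sigma(\ch(W))\bigr)=\ch(W)$ because $\ch\circ\sigma$ is the identity. Applying Lemma~\ref{L:prespectra} to the pair $\sigma(\ch(W))$ and $W$ yields $\ch\bigl(\Psi(\sigma(\ch(W)))\bigr)=\ch(\Psi(W))$, which is precisely $G_{\Psi}(\ch(W))=\ch(\Psi(W))$. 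Uniqueness is then immediate: $\ch$ is surjective onto $\Sigma^{n}(\Omega)$, so any two self-maps of $\Sigma^{n}(\Omega)$ satisfying the relation must agree on the image of $\ch$, hence everywhere.

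The only genuine content is Lemma~\ref{L:prespectra}, which is where the hypothesis $\#(\C\setminus\Omega)\geq 2n$ enters; the rest of the argument is structural. The apparent obstacle, if one instead tried to define $G_{\Psi}$ as the descent of $\ch\circ\Psi$ through the quotient $\ch$, would be proving holomorphicity across the discriminant locus in $\Sigma^{n}(\Omega)$, where $\ch$ fails to be a submersion; one would then have to supply local holomorphic sections on the generic stratum, verify continuity on the whole of $\Sigma^{n}(\Omega)$ (e.g.\ via continuous dependence of eigenvalues), and finally invoke a Riemann-type extension theorem across the discriminant. The companion-matrix section makes all of this unnecessary, and has the added virtue of being canonical and polynomial in the coordinates on $\Sigma^{n}(\Omega)$.
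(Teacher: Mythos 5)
Your proof is correct and is essentially the paper's own argument: the paper also defines the same companion-matrix section (there denoted $\kappa$) as a global holomorphic right inverse of $\ch$, and appeals to Lemma~\ref{L:prespectra} to show the induced map is well defined and equals $\ch\circ\Psi\circ\kappa$. The only cosmetic difference is that you define $G_{\Psi}$ directly via the section and then check the commutation relation, whereas the paper first defines $G_{\Psi}$ as a relation via $\ch^{-1}$, shows it is single-valued, and then identifies it with the section-composite to get holomorphicity.
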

\begin{proof}
Consider a relation $G_\Psi$ from $\Sigma^n(\Omega)$ into $\Sigma^n(\Omega)$ defined by
\begin{equation*}\label{E:autsym}
G_\Psi(z):=\ch\circ \Psi\circ{\ch}^{-1}(z) \ \ \ \forall z\in \Sigma^n(\Omega).
\end{equation*}
From Lemma~\ref{L:prespectra}, it follows that for each $z\in \Sigma^n(\Omega)$, $G_\Psi(z)$ is a singleton.
Hence $G_\Psi:\Sigma^n(\Omega)\lrarw\Sigma^n(\Omega)$ is a well-defined map that satisfies the relation 
$G_{\Psi}\circ\ch=\ch\circ\Psi$. The lemma now follows once we prove the following claim:
\smallskip

\noindent{\bf Claim.} $G_\Psi$ is holomorphic.
\smallskip

\noindent To see this, fix $z\in\Sigma^n(\Omega)$. Now consider the polynomial
$P_z(t):=t^n+\sum_{j=1}^n(-1)^jz_j\,t^{n-j}$ and define the map $\kappa:\Sigma^n(\Omega)\lrarw M_n(\C)$ by setting
\begin{equation*}\label{E:tau}
\kappa(z):=\mathsf{C}\big(P_z\big),
\end{equation*}
where $\mathsf{C}\big(P_z\big)$ denotes the companion matrix of the polynomial $P_z$. Recall,
given a monic polynomial of degree $k$ of the form $p(t)=t^k+\sum_{j=1}^ka_j\,t^{k-j}$, where $a_j\in\C$,
the \emph{companion matrix} of $p$ is the matrix $\mathsf{C}(p)\in M_{k}(\C)$ given by
\[
\mathsf{C}(p):=
\begin{bmatrix}
\ 0  & {} & {} & -a_k \ \\
\ 1  & 0  & {} & -a_{k-1} \ \\
\ {} & \ddots  & \ddots & \vdots \ \\
\ \text{\LARGE{0}} &   & 1 & -a_{1} \
\end{bmatrix}_{k\times k}.
\]
It is a fact that $\ch(\mathsf{C}(p))=(a_1,\dots,a_k)$. From this, it follows that 
$\kappa$ is holomorphic and $\ch\circ\kappa=\I$ on
$\Sigma^n(\Omega)$.
This, in particular, implies that $\kappa(z)\in\ch^{-1}(z)$. Applying Lemma~\ref{L:prespectra} again, we see that
$\ch\circ \Psi\circ{\ch}^{-1}(z)=\ch\circ \Psi\circ\kappa(z)$, i.e., $G_\Psi(z)=\ch\circ \Psi\circ\kappa(z)$.
Since each of the maps $\ch, \Psi, \kappa$ are holomorphic, the 
claim follows.
\end{proof}

\section{Preparations for the proof of Theorem~\ref{T:mainT1}}\label{S:Prep}
In this section, we devise certain ingredients that play a crucial role in the proof of Theorem~\ref{T:mainT1}. We
first show that given a point $A\in M_n(\C)$ ($\equiv\C^{n^2}$) there is a polydisc centered at $A$ on which the map
$\ch$ could be decomposed. We also describe the utility of this decomposition to our proof of Theorem~\ref{T:mainT1}.
In what follows, given integers $j< k$, $\intgR{j}{k}$
will denote the set of integers $\{j, j+1,\dots, k\}$.

\subsection{Local decomposition of $\ch$}\label{SS:locdecochi}
Recall that given $x\in\C^n$, $P_x(t)$ is the polynomial $t^n+\sum_{j=1}^n(-1)^jx_j\,t^{n-j}$. Given $n\geq 2$, suppose there exist positive 
integers $n_i,\,i\in\intgR{1}{m}$ such that $\sum_{i=1}^m n_i=n$.
Consider the map $\tau:\prod_{i=1}^m\sym{n_i}\lrarw\sym{n}$ defined by
\begin{equation}\label{E:deftau}
\tau(x_1,\dots,x_m)=y, \ \  \text{where $y$ satisfies} \ \
P_{y}(t)=\prod_{i=1}^m P_{x_i}(t).
\end{equation}
Note that $\tau$ is a holomorphic surjective map. We now state the result regarding the local decomposition of $\ch$.
\begin{lemma}\label{L:localdecom}
Let $A\in\Sn$, $n\geq 2$, and write $\sigma(A):=\{\lam_1,\dots,\lam_m\}$ such that for each $i\in\intgR{1}{m}$, $n_i$ is the algebraic multiplicity
of $\lam_i$. Then there exists a $\delta>0$ such that on the polydisc ${\sf{P}}(A;\,\delta)$, the map 
$\ch$ decomposes as $\ch\,=\,\tau\circ\theta$, where $\theta:{\sf{P}}(A;\,\delta)\lrarw\prod_{i=1}^m\sym{n_i}$ is a holomorphic open map and
$\tau:\prod_{i=1}^m\sym{n_i}\lrarw\sym{n}$, as defined above, is a biholomorphism from $\theta({\sf{P}}(A;\,\delta))$ onto $\ch({\sf{P}}(A;\,\delta))$.
\end{lemma}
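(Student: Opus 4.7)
The plan is to build $\theta$ explicitly using the Riesz projectors associated with the spectral decomposition of $A$, so that the factorization $\ch=\tau\circ\theta$ amounts to splitting the characteristic polynomial along the $A$-invariant direct-sum decomposition of $\C^n$. I would first pick pairwise disjoint open discs $D_i\subset\OM$ centered at the distinct eigenvalues $\lam_i$ of $A$, with $\overline{D_i}\subset\OM$, and then choose $\delta>0$ small enough that for every $W\in{\sf P}(A;\delta)$ the spectrum of $W$ lies in $\bigsqcup_iD_i$ with exactly $n_i$ eigenvalues (counted with algebraic multiplicity) inside $D_i$. The Riesz projector
\[
P_i(W):=\tfrac{1}{2\pi i}\oint_{\bdy D_i}(zI-W)^{-1}\,dz
\]
is then a well-defined, holomorphic, $M_n(\C)$-valued function on ${\sf P}(A;\delta)$; each $P_i(W)$ commutes with $W$, $\sum_iP_i(W)=I$, and the range $V_i(W):=P_i(W)\C^n$ has constant dimension $n_i$ throughout the polydisc. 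Applying $P_i(W)$ to a fixed basis of $V_i(A)$ yields a holomorphically varying frame for $V_i(W)$, in which $W|_{V_i(W)}$ is represented by an $n_i\times n_i$ matrix with holomorphic entries; setting $\theta_i(W)$ to be the coefficients of the characteristic polynomial of this $n_i\times n_i$ matrix (which lies in $\sym{n_i}$ since its eigenvalues lie in $D_i\subset\OM$) and $\theta:=(\theta_1,\dots,\theta_m)$ produces the required holomorphic map into $\prod_i\sym{n_i}$.

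Next I would verify the factorization $\ch=\tau\circ\theta$ and the local biholomorphy of $\tau$ at $\theta(A)$. The factorization is immediate from multiplicativity of the determinant across the invariant decomposition:
\[
P_{\ch(W)}(t)=\det(tI-W)=\prod_{i=1}^m\det\!\big(tI_{V_i(W)}-W|_{V_i(W)}\big)=\prod_{i=1}^mP_{\theta_i(W)}(t).
\]
For the biholomorphy, $P_{\theta_i(A)}(t)=(t-\lam_i)^{n_i}$, and these polynomials are pairwise coprime since the $\lam_i$ are distinct. Identifying the tangent space to $\sym{n_i}$ at $\theta_i(A)$ with polynomials $Q_i$ of degree less than $n_i$, the differential $d\tau_{\theta(A)}$ sends $(Q_1,\dots,Q_m)$ to $\sum_iQ_i(t)\prod_{j\neq i}(t-\lam_j)^{n_j}$; reducing modulo $(t-\lam_i)^{n_i}$ and using pairwise coprimality together with $\deg Q_i<n_i$ forces every $Q_i$ to vanish, so this map is injective from an $n$-dimensional space into an $n$-dimensional space, hence an isomorphism. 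Shrinking $\delta$ if necessary places $\theta({\sf P}(A;\delta))$ inside a neighborhood on which $\tau$ is a biholomorphism, and $\tau$ then restricts to a biholomorphism onto $\ch({\sf P}(A;\delta))$.

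The most delicate point is the openness of $\theta$. Since $\tau$ is a local biholomorphism on the relevant neighborhood, openness of $\theta$ is equivalent to openness of $\ch$ on ${\sf P}(A;\delta)$, and this is not automatic when $A$ is derogatory: by Proposition~\ref{P:rankofcharmap} the differential $d\ch_A$ has rank equal to the degree of the minimal polynomial of $A$, which may be strictly less than $n$. My approach would be to exploit the equidimensionality of the fibers of $\ch$: every fiber $\ch^{-1}(y)\subset M_n(\C)$ contains, as a dense Zariski-open subset, the conjugacy class of the companion matrix of $P_y$, which has constant dimension $n^2-n$, so $\ch$ is an equidimensional holomorphic map with smooth source, hence flat and therefore open. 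A hands-on alternative is to construct preimages directly: for every $y$ close to $\ch(A)$, partition the roots of $P_y$ into $m$ clusters lying in the $D_i$'s, perturb the Jordan form of $A|_{V_i(A)}$ so that its spectrum matches the $i$-th cluster (by modifying the diagonal entries), and reassemble a matrix in ${\sf P}(A;\delta)$; running the same local construction at any other point of the polydisc yields the open-mapping property in full.
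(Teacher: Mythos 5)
Your proof is correct and reaches the same factorization, but along a genuinely different route in two of the three steps. For the construction of $\theta$, you use the Riesz projectors $P_i(W)=\tfrac{1}{2\pi i}\oint_{\bdy D_i}(zI-W)^{-1}dz$ to realize $\theta_i(W)$ as the characteristic coefficients of $W$ restricted to a holomorphically varying spectral subspace, which gives holomorphicity of $\theta$ immediately and intrinsically. The paper instead defines $\theta_i(W)$ purely set-theoretically by clustering the roots of the characteristic polynomial into the discs $\mathbb D(\lam_i;r)$, and only at the very end deduces holomorphicity of $\theta$ a posteriori from the identity $\theta=\big(\tau|_{\theta({\sf P}(A;\delta))}\big)^{-1}\circ\ch$. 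For the biholomorphism of $\tau$, you verify that $d\tau_{\theta(A)}$ is invertible by reducing $\sum_iQ_i\prod_{j\neq i}(t-\lam_j)^{n_j}$ modulo each $(t-\lam_i)^{n_i}$ and invoking coprimality; the paper instead shows $\tau$ is injective on $\theta({\sf P}(A;\delta))$ by matching roots of $P_{x_j}$ and $P_{y_j}$ across the disjoint discs, and then relies on the open-mapping theorem for equidimensional injective holomorphic maps. Your Jacobian computation is more quantitative and gives non-vanishing of the differential at $\theta(A)$ outright; the paper's argument gives injectivity on the whole image $\theta({\sf P}(A;\delta))$ at once. On the most delicate point, the openness of $\theta$, you correctly flag that this is \emph{not} a consequence of nondegeneracy of $d\ch_A$ (which fails when $A$ is derogatory, by Proposition~\ref{P:rankofcharmap}). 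Your primary route, via equidimensionality of the fibers of $\ch$ (each fiber is the closure of the $n^2-n$ dimensional conjugacy class of a companion matrix) and Remmert-type open-mapping/flatness, is a clean conceptual argument that the paper does not use. Your ``hands-on alternative''\,---\,perturbing diagonal entries of a triangularization to match prescribed root clusters\,---\,is in fact exactly what the paper does in Proposition~\ref{P:chiopen} (using Ostrowski's bound to control the perturbation), and the paper simply refers Lemma~\ref{L:localdecom} to that appendix for both $\ch$ and $\theta$. In short: the Riesz-projector construction and the flatness argument for openness are conceptually cleaner and more self-contained, at the cost of invoking heavier machinery; the paper's version is more elementary and explicit but defers holomorphicity and openness to separate arguments.
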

\begin{proof}
Choose an $r>0$ such that $r<\min\{{|\lam_i-\lam_j|}\big{/}2\,:\,i,j\in\intgR{1}{m},\,i\neq j\}$ and
the discs $\mathbb D(\lam_i;\,r):=\{\zt\in\C\,:\,|\zt-\lam_i|<r\}$ are contained in $\Omega$.
Now using the continuity of the map $\ch$ and the fact that the roots of a polynomial\,---\,as a function of its coefficients\,---\,vary continuously, we 
can find a $\delta>0$ such that for any $W \in{\sf{P}}(A;\,\delta)\subset\C^{n^2}$ 
the number of eigenvalues of $W$ in $\mathbb D(\lam_i;\,r)$, counted with multiplicity, is $n_i$ for all $i\in\intgR{1}{m}$.
Given $W\in{\sf{P}}(A;\,\delta)$, denote by $\bv{\sigma_i(W)}$ a list of eigenvalues 
of $\sigma(W)$ that lie in the disc $\mathbb D(\lam_i;\,r)$ and are repeated with their multiplicity. Note that the number of elements in 
$\bv{\sigma_i(W)}$ is $n_i$ for each $i$. Now, we define the map $\theta:{\sf{P}}(A;\,\delta)\lrarw\prod_{i=1}^m\sym{n_i}$ by 
\begin{equation}\label{E:deftheta}
\theta(W)=\big(\theta_1(W),\dots,\theta_m(W)\big), \ \text{where each $\theta_i(W)$ satisfy} \ 
P_{\theta_i(W)}(t)=\prod_{\mu\in\bv{\sigma_i(W)}}(t-\mu). 
\end{equation}
It is not difficult to see that $\ch$ and $\theta$ are open maps\,---\,see Section~\ref{S:propchi} for details.
It follows from \eqref{E:deftau} and \eqref{E:deftheta} that 
\begin{equation}\label{E:locdeco}
\tau(\theta_1(W),\dots,\theta_m(W))\,=\,\ch(W) \ \ \text{for all $W\in {\sf{P}}(A;\,\delta)$}.
\end{equation}

We now show that $\tau$ is a biholomorphism from the open set $\theta({\sf{P}}(A;\,\delta))$ onto the open set $\ch({\sf{P}}(A;\,\delta))$.
Notice we only need to show that $\tau$ is injective on $\theta({\sf{P}}(A;\,\delta))$. Suppose
\begin{equation}\label{E:injassp}
\tau(x_1,\dots,x_m)=\tau(y_1,\dots,y_m),
\end{equation}
where $(x_1,\dots,x_m)=\theta(W_1)$ and $(y_1,\dots,y_m)=\theta(W_2)$ for some $W_1, W_2\in {\sf{P}}(A;\,\delta)$. 
It follows from the definition of $\tau$ and \eqref{E:injassp} that 
\begin{equation}\label{E:eqprodpoly}
\prod_{j=1}^{m} P_{x_j}(t)=\prod_{j=1}^{m} P_{y_j}(t).
\end{equation}
Fix a $j\in\intgR{1}{m}$. Since $x_j=\theta_j(W_1)$ and $W_1\in{\sf{P}}(A;\,\delta)$, the zeros of $P_{x_j}(t)$ lie 
in the disc $\mathbb D(\lam_j;\,r)$.
Similarly for any $k\neq j$\,---\,since $y_k=\theta_k(W_2)$ and $W_2\in{\sf{P}}(A;\,\delta)$\,---\,the zeros of 
the polynomial
$P_{y_k}(t)$ lie in the disc $\mathbb D(\lam_k;\,r)$. As $\mathbb D(\lam_j;\,r)\cap \mathbb D(\lam_k;\,r)=\emptyset$
for $k\neq j$ whence the zeros of the polynomial $P_{x_j}(t)$ are also the zeros of $P_{y_j}(t)$. Reversing 
this argument we see that the zeros of $P_{x_j}(t)$ and $P_{y_j}(t)$ coincide. Hence $x_j=y_j$ for each $j\in\intgR{1}{m}$
showing the injectivity of $\tau$ on $\theta({\sf{P}}(A;\,\delta))$.
\smallskip 

The holomorphicity of the map $\theta$ on ${\sf{P}}(A;\,\delta)$ now follows from \eqref{E:locdeco} together with the fact that
$\tau$ is a biholomorphism from $\theta({\sf{P}}(A;\,\delta))$ onto $\ch({\sf{P}}(A;\,\delta))$.
\end{proof}

We now present a lemma that paves the way towards the proof of Theorem~\ref{T:mainT1}.
\begin{lemma}\label{L:princcommdgm}
Given $n\in\nat,\,n\geq 2$, and a domain $\OM$ in $\C$ satisfying $\#(\C\setminus\OM)\geq 2n$. Let 
$\Psi\in\hol(\Sn,\,\Sn)$ such that $\Psi(A)=A$, $\Psi'(A)=\I$ for some $A\in\Sn$. Then there exist neighbourhoods 
$\mathcal{V}_j\subset{\sf{P}}(A;\,\delta)$, $j=1,2$, of $A$ satisfying $\mathcal{V}_2=\Psi(\mathcal{V}_1)$ such that if we define
$F_{\Psi}:\theta(\mathcal{V}_1)\lrarw\theta(\mathcal{V}_2)$ by
\begin{equation}
F_{\Psi}\equiv \big(\tau\big{|}_{\theta(\mathcal{V}_2)}\big)^{-1}\circ G_{\Psi}\circ\tau,
\end{equation}
where $\theta,\,\tau,\,{\sf{P}}(A;\,\delta)$ are as in Lemma~\ref{L:localdecom} and $G_{\Psi}$ is as in Lemma~\ref{L:indmapsymprod} then 
the following diagram is 
commutative: \vspace{-0.5cm}
\begin{figure}[h!]
\[
\begin{tikzcd}
 \mathcal{V}_1 \arrow [rr,"\Psi"]  \arrow[d, "\theta"]  & &\mathcal{V}_2  \arrow[d, "\theta"] \\
  \theta(\mathcal{V}_1) \arrow[rr, "F_{\Psi}"] \arrow[d, "\tau"]  & & \theta(\mathcal{V}_2)\arrow[d, "\tau"] \\
  \ch(\mathcal{V}_1) \arrow[rr,"G_\Psi"]& & \ch(\mathcal{V}_2)\\
   \end{tikzcd}
   \] \vspace{-1.0cm}
   \caption{}
   \label{F:comd2}
\end{figure}
\end{lemma}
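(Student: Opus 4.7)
The plan is to construct the neighbourhoods using the inverse function theorem, then verify the top square of the diagram by pulling back through $\tau$ and using its injectivity; the bottom square will hold by construction.

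First, since $\Psi'(A) = \I$ is invertible, the holomorphic inverse function theorem gives open neighbourhoods $U, V$ of $A$ such that $\Psi|_{U} : U \to V$ is a biholomorphism. Shrinking $U$, I can arrange $U \subset {\sf{P}}(A;\,\delta)$ and $\Psi(U) \subset {\sf{P}}(A;\,\delta)$. Set $\mathcal{V}_1 := U$ and $\mathcal{V}_2 := \Psi(\mathcal{V}_1)$; by the inverse function theorem $\mathcal{V}_2$ is open, and both lie inside ${\sf{P}}(A;\,\delta)$, so Lemma~\ref{L:localdecom} applies on each, giving $\ch = \tau \circ \theta$ on $\mathcal{V}_1 \cup \mathcal{V}_2$, with $\tau$ a biholomorphism on $\theta({\sf{P}}(A;\,\delta)) \supset \theta(\mathcal{V}_2)$.

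Next, I check that $F_\Psi = (\tau|_{\theta(\mathcal{V}_2)})^{-1} \circ G_\Psi \circ \tau$ is well defined and holomorphic as a map $\theta(\mathcal{V}_1) \to \theta(\mathcal{V}_2)$. For any $x \in \theta(\mathcal{V}_1)$, write $x = \theta(W)$ with $W \in \mathcal{V}_1$. Then $\tau(x) = \ch(W) \in \ch(\mathcal{V}_1)$, and by Lemma~\ref{L:indmapsymprod},
\[
G_\Psi(\tau(x)) = G_\Psi(\ch(W)) = \ch(\Psi(W)) \in \ch(\mathcal{V}_2) = \tau(\theta(\mathcal{V}_2)),
\]
so the inverse $(\tau|_{\theta(\mathcal{V}_2)})^{-1}$ may be applied to it. Holomorphicity of $F_\Psi$ then follows from the holomorphicity of $\tau$, $G_\Psi$, and $(\tau|_{\theta(\mathcal{V}_2)})^{-1}$ (the last since $\tau$ is a biholomorphism on $\theta({\sf{P}}(A;\,\delta))$ by Lemma~\ref{L:localdecom}). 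The bottom square of Figure~\ref{F:comd2} is then immediate from the very definition of $F_\Psi$.

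For the top square, I must show $\theta \circ \Psi = F_\Psi \circ \theta$ on $\mathcal{V}_1$. Both sides land in $\theta(\mathcal{V}_2) \subset \theta({\sf{P}}(A;\,\delta))$, on which $\tau$ is injective. Composing with $\tau$:
\[
\tau \circ \theta \circ \Psi = \ch \circ \Psi = G_\Psi \circ \ch = G_\Psi \circ \tau \circ \theta = \tau \circ F_\Psi \circ \theta,
\]
where the first equality uses $\ch = \tau \circ \theta$ on $\mathcal{V}_2 \subset {\sf{P}}(A;\,\delta)$, the second uses Lemma~\ref{L:indmapsymprod}, the third uses $\ch = \tau \circ \theta$ on $\mathcal{V}_1$, and the last uses the bottom square. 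Injectivity of $\tau$ on $\theta({\sf{P}}(A;\,\delta))$ then yields $\theta \circ \Psi = F_\Psi \circ \theta$ on $\mathcal{V}_1$. The main (and only nontrivial) point is the careful choice of $\mathcal{V}_1$ so that $\Psi$ restricted to it lands back inside ${\sf{P}}(A;\,\delta)$ and $\mathcal{V}_2$ is open — everything else is a diagram chase powered by the injectivity of $\tau$ on $\theta({\sf{P}}(A;\,\delta))$ furnished by Lemma~\ref{L:localdecom}.
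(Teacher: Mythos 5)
Your proposal is correct and follows essentially the same route as the paper: obtain $\mathcal{V}_1,\mathcal{V}_2\subset{\sf{P}}(A;\,\delta)$ via the inverse function theorem, then establish the top square by combining $\tau\circ\theta=\ch$ with $G_\Psi\circ\ch=\ch\circ\Psi$ and the local invertibility of $\tau$. The only cosmetic difference is that you compose both sides with $\tau$ and invoke injectivity, whereas the paper directly unwinds the definition of $F_\Psi$ using $\theta=(\tau|_{\theta(\mathcal{V}_2)})^{-1}\circ\ch$; these are the same argument read in opposite directions.
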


\begin{proof} Since $\Psi(A)=A$ and $\Psi'(A)=\I$, the inverse function theorem implies that there are neighbourhoods
$\mathcal{V}_1,\,\mathcal{V}_2$ of $A$ that are contained in the polydisc ${\sf{P}}(A;\,\delta)$ such that $\Psi(\mathcal{V}_1)=\mathcal{V}_2$. 
Observe that we only need to show that on $\mathcal{V}_1$,
\[
 F_{\Psi}\circ\theta=\theta\circ\Psi.
 \]
Notice, by the definition of $F_{\Psi}$, we have
\[
 F_{\Psi}\circ\theta=\big[\big(\tau\big{|}_{\theta(\mathcal{V}_2)}\big)^{-1}\circ G_{\Psi}\circ\tau\big]\circ\theta
                             =\big(\tau\big{|}_{\theta(\mathcal{V}_2)}\big)^{-1}\circ G_{\Psi}\circ\ch.
                             \]
In the above we have used the identity $\tau\circ\theta=\ch$ on ${\sf{P}}(A;\,\delta)$. Now, since $G_{\Psi}\circ\ch=\ch\circ\Psi$,
the above equation becomes
\[
  F_{\Psi}\circ\theta=\big(\tau\big{|}_{\theta(\mathcal{V}_2)}\big)^{-1}\circ\ch\circ\Psi.
  \]
  Now, on $\mathcal{V}_2$, we have $\theta=\big(\tau\big{|}_{\theta(\mathcal{V}_2)}\big)^{-1}\circ\ch$. Putting this into the above equation
  gives us the desired equality. 
 \end{proof}

The following two points encapsulates the importance of Lemma~\ref{L:princcommdgm} and the commutative diagram therein
in our proof of Theorem~\ref{T:mainT1}:

\begin{itemize}[leftmargin=14pt]
\item The main goal in our proof of Theorem~\ref{T:mainT1}\,---\,when $A$ is diagonalizable\,---\,will be to prove that the trace of
$G'_{\Psi}$ at $a=\ch(A)$ is $n$. Since the map $G_{\Psi}$ and $F_{\Psi}$ are locally
biholomorphic conjugates of each other, it is sufficient to show that there is a basis $\mathfrak{B}$ of $\C^n$ such that the trace of
$F'_{\Psi}$ at $a^*=\theta(A)$ with respect to 
$\mathfrak{B}$ is $n$. 
\smallskip

\item The commutativity of the upper-half part of the above diagram enables us in computing the trace of $F'_{\Psi}$ at $a^*$
with respect to an appropriately chosen basis $\mathfrak{B}$ as mentioned above. In fact, in the next subsection we shall construct a basis $\mathfrak{B}$ and
derive a very important information regarding the trace 
of certain diagonal blocks of $[F'_{\Psi}(a^*)]_{\mathfrak{B}}$:=the derivative matrix of $F_{\Psi}$ at $a^*$ with respect to $\mathfrak{B}$. 
\end{itemize}

\subsection{An important proposition}\label{SS:crucprop}
We continue with the set-up as in Lemma~\ref{L:princcommdgm}. Assuming that
the matrix $A$ in the aforementioned lemma is a diagonal matrix, we derive an important information
regarding the trace of $[F'_{\Psi}(a^*)]_{\mathfrak{B}}$ with respect to an appropriately chosen basis 
$\mathfrak{B}$. For simplicity, we shall write the maps $F_{\Psi},\,G_{\Psi}$ as $F,\,G$ respectively. Note the map $F:\theta(\mathcal{V}_1)\longrightarrow\theta(\mathcal{V}_2)$, where $\mathcal{V}_1$ and $\mathcal{V}_2$ are as in Lemma~\ref{L:princcommdgm}, can be written as 
$F=(F_1,\dots,F_m) \ \text{such that} \ F_i(\theta(\mathcal{V}_1))\subset\Sigma^i(\Omega)$ for all $i\in\intgR{1}{m}$. Also, if we let $\mathfrak{A}_i:=\{{\bf e}^i_1,\dots,{\bf e}^i_{n_i}\}$ denote the standard basis of $\C^{n_i}$ then write
$F_i:=\sum_{j=1}^{n_i}F_{i,j}\,{\bf e}^i_j$ on $\theta(\mathcal{V}_1)$.
The following result by Sun is at the heart of the proof of the main result of this subsection:

\begin{result}[paraphrasing of Corollary~1.2 in \cite{Sun:Varspectnormal96}]\label{Res:sun}
Let $X\in M_n(\C)$ be a normal matrix with $\bv{\sigma(X)}=\{\zt_1,\dots,\zt_n\}$. Here $\bv{\sigma(X)}$
denotes a list of eigenvalues of $X$ repeated according to their multiplicity. Let $Y$
be any other matrix with $\bv{\sigma({Y})}=\{{\xi}_1,\dots,{\xi}_n\}$. Then there exists a permutation $\pi$ of $\intgR{1}{n}$ such that
\[
\max\{|\,{\xi}_{\pi(j)}-{\zt}_j\,|\,:\,j\in\intgR{1}{n}\}\,\leq\,n\,||\,{X}-Y\,||_{\rm{op}},
\]
where $||\bcdot||_{\rm{op}}$ denotes the operator norm of a matrix considered as a bounded linear operator on the Hilbert space $(\C^n,\,||\bcdot||_2)$. 
\end{result}

We are now in a position to state our main result of this subsection.
(In what follows, given a finite set of ordered vectors $\mathfrak{S}$ of $\C^k$, 
we shall denote by $[\mathfrak{S}]$ the matrix of $\mathfrak{S}$ with respect to the standard basis of $\C^k$).
\begin{proposition}\label{P:pincpropdiag}
Suppose the matrix $A$ in Lemma~\ref{L:princcommdgm} is a diagonal matrix with the $i_0$-th eigenvalue $\lam_{i_0}=0$. Then 
there is a basis $\mathfrak{B}_i, \ i\in\intgR{1}{m}$ of $\C^{n_i}$ such that the basis $\mathfrak{B}$ of $\C^n$ defined by 
\[
[\mathfrak{B}]=[\mathfrak{B}_1]\oplus\dots\oplus[\mathfrak{B}_m],
\]
has the property that the $i_0$-th diagonal block of size $n_{i_0}\times n_{i_0}$ of the matrix $\big[F'(a^*)\big]_{\mathfrak{B}}$ has trace $n_{i_0}$.
\end{proposition}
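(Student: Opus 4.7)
The plan is to isolate the diagonal entries of the block $M := \partial F_{i_0}/\partial x_{i_0}(a^*)$ (an $n_{i_0}\times n_{i_0}$ matrix) one at a time, by feeding carefully chosen linear curves $W_t := A + tB$ into the commutative diagram of Lemma~\ref{L:princcommdgm} and controlling the eigenvalues of $\Psi(W_t)$ near $0$ via Result~\ref{Res:sun}. Since the trace of the $(i_0,i_0)$-diagonal block of $[F'(a^*)]_{\mathfrak{B}}$ is invariant under any change of basis within the $i_0$-th summand, the bases $\mathfrak{B}_i$ in the statement may be chosen freely (say as the standard ones); the real content is to show $\mathrm{tr}(M) = n_{i_0}$.

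First, I would take $B$ to be the diagonal matrix with entries $\mu_1, \dots, \mu_{n_{i_0}}$ placed in the $n_{i_0}\times n_{i_0}$ block of $A$ corresponding to $\lambda_{i_0} = 0$, and zeros elsewhere, so that $W_t = A + tB$ is diagonal and hence normal. Its eigenvalues near $0$ are exactly $t\mu_1, \dots, t\mu_{n_{i_0}}$, and the remaining eigenvalues are the unchanged $\lambda_i$ (with multiplicity $n_i$). A direct computation then gives $\theta_j(W_t) = \theta_j(A)$ for $j \ne i_0$ and $\theta_{i_0,k}(W_t) - \theta_{i_0,k}(A) = t^k c_k$ for $k\in\intgR{1}{n_{i_0}}$, where $c_k := \pi_{n_{i_0},k}(\mu_1,\dots,\mu_{n_{i_0}})$, using that $\theta_{i_0,k}(A) = 0$ because $\lambda_{i_0}=0$.

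Next, since $\Psi(A) = A$ and $\Psi'(A) = \I$, the Taylor expansion of $\Psi$ at $A$ yields $\|\Psi(W_t) - W_t\|_{\mathrm{op}} = o(t)$; applying Result~\ref{Res:sun} to the normal matrix $W_t$ and to $\Psi(W_t)$ produces a permutation $\pi$ under which the eigenvalues $\nu_j(t)$ of $\Psi(W_t)$ near $0$ satisfy $\nu_j(t) = t\mu_{\pi(j)} + o(t)$. Symmetrising gives $\theta_{i_0,k}(\Psi(W_t)) = t^k c_k + o(t^k)$ for each $k$. By Lemma~\ref{L:princcommdgm}, $F_{i_0,k}(\theta(W_t)) = \theta_{i_0,k}(\Psi(W_t))$, and the left-hand side is a holomorphic function of $t$. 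Expanding it by Taylor at $a^*=\theta(A)$\,---\,using that only the $i_0$-th factor of $\theta(W_t) - a^*$ is nonzero and equals $(tc_1, t^2 c_2, \dots, t^{n_{i_0}} c_{n_{i_0}})$\,---\,shows the $t^p$-coefficient of the left-hand side equals
\[
M_{kp}\,c_p \;+\; Q^{(p)}_{k}(c_1, \dots, c_{p-1}),
\]
where the polynomial $Q^{(p)}_k$ collects contributions from higher-order derivatives of $F_{i_0,k}$: each monomial arising from $\partial^r F_{i_0,k}\,\eta_{j_1}\cdots\eta_{j_r}$ with $r\ge 2$ sits at power $t^{j_1+\cdots+j_r}$ with every $j_l\ge 1$, forcing $j_l \le p-1$.

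Comparing with the right-hand side at $p = k$ (the lowest power of $t$ appearing there, with coefficient $c_k$) gives $M_{kk}\,c_k + Q^{(k)}_k(c_1, \dots, c_{k-1}) = c_k$. Since the symmetrisation map $\pi_{n_{i_0}}$ is surjective onto $\C^{n_{i_0}}$, I may choose the $\mu_j$'s so that $c_1 = \cdots = c_{k-1} = 0$ with $c_k$ ranging freely over $\C$, which forces $M_{kk} = 1$. Summing over $k\in\intgR{1}{n_{i_0}}$ then yields $\mathrm{tr}(M) = n_{i_0}$, proving the proposition. The main obstacle I anticipate is that the individual eigenvalues $\nu_j(t)$ need not be holomorphic in $t$, so Sun's bound on its own gives no information about any Taylor coefficient; the key mechanism is that the symmetric combinations $\theta_{i_0,k}(\Psi(W_t))$ \emph{are} holomorphic, and the estimate $o(t^k)$ pins down their $t^k$-coefficient to be exactly $c_k$, converting Sun's analytic bound into the algebraic identity $M_{kk} = 1$.
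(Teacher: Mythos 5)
Your argument is correct and rests on the same two pillars as the paper's proof: perturb $A$ by diagonal matrices supported in the $i_0$-th block (so the perturbed matrices remain normal), apply Sun's bound (Result~\ref{Res:sun}) to control the eigenvalues of $\Psi$ of the perturbed matrix near $0$, and then compare the elementary symmetric functions of those eigenvalues with the Taylor expansion of $F_{i_0,k}$ at $a^*$. The difference is in the implementation: the paper fixes very specific perturbations $A+sD_k$ built from $k$-th roots of $-1$, so that after the substitution $s=\eps^{1/k}$ the curve moves in the single coordinate direction ${\bf E}^{i_0}_k$ and the partial derivative $\partial F_{i_0,j}/\partial{\bf E}^{i_0}_k(a^*)$ can be computed directly, which in fact exhibits the whole $i_0$-th block as $\mathbb{I}_{n_{i_0}}+N$ with $N$ strictly upper triangular. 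You instead take a general diagonal perturbation, use that $t\mapsto F_{i_0,k}(\theta(W_t))$ is holomorphic so that the $o(t^k)$ estimate coming from Sun's bound pins down its $t^k$-Taylor coefficient to be $c_k$, and then specialize $c_1=\dots=c_{k-1}=0$ (with $c_k$ free) to isolate $M_{kk}=1$; this yields only the diagonal entries of the block, but that is all the trace assertion needs, and it avoids both the fractional-power reparametrization and the root-of-unity bookkeeping. Your preliminary remark that the trace of the $i_0$-th diagonal block is invariant under the admitted block-diagonal changes of basis is also correct and legitimately reduces the statement to a basis-free claim.
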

\begin{proof}
We first construct the basis $\mathfrak{B}_i:=\big\{{\bf v}^i_j\in\C^{n_i}\,:\,j\in\intgR{1}{n_i}\big\}$ for each $i\in\intgR{1}{m}$. These are defined by the equation
\[
P_{\theta_i(A)+\eta\,{\bf v}^i_j}(t)=(t-\lam_i)^{n_i}+\eta\,(t-\lam_i)^{n_i-j} \ \ \ \forall j\in\intgR{1}{n_i},
\]
where $P$ and $\theta_i$'s are as in Subsection~\ref{SS:locdecochi}. It is easy to see that
\[
{\bf v}^i_j\,:=\,
\begin{cases}
\big(0,\dots,0,1,\pi_{n_i-j}(\lam_i,\dots,\lam_i)\big), &\text{if $j\in\intgR{1}{n_i-1}$},\\
(0,\dots,0,1), &\text{otherwise},
\end{cases}
\]
where $\pi_{n_i-j}:\C^{n_i-j}\longrightarrow\C^{n_i-j}$ is the symmetrization map. Notice $\mathfrak{B}_i$ is a set of $n_i$ linearly independent vectors
and hence $\mathfrak{B}_i$ forms a basis of $\C^{n_i}$. Since $\lam_{i_0}=0$, we also see that $\mathfrak{B}_{i_0}= \mathfrak{A}_{i_0}$.
For each $i\in\intgR{1}{m}$, we write
\[
\theta_i=\sum_{j=1}^{n_i}\widehat{\theta}_{i,j}\,{\bf v}^i_j \ \ \ \text{and} \ \ \ F_i=\sum_{j=1}^{n_i}\widehat{F}_{i,j}\,{\bf v}^i_j.
\]
Since $\mathfrak{B}_{i_0}= \mathfrak{A}_{i_0}$, we have $\widehat{\theta}_{i_0,j}=\theta_{i_0,j}$ and
$\widehat{F}_{i_0,j}=F_{i_0,j}$ for all $j\in\intgR{1}{n_{i_0}}$. Denote by ${\bf V}^{i_0}_j:=
\big({\bf V}^{i_0}_{j,\,1},\dots,{\bf V}^{i_0}_{j,\,i},\dots,{\bf V}^{i_0}_{j,\,m}\big)\in\prod_{i=1}^{m}\C^{n_i}$
such that
${\bf V}^{i_0}_{j,\,i}=\bf{0}$ when
$i\neq i_0$ and ${\bf V}^{i_0}_{j,\,i_0}={\bf v}^{i_0}_j$.
\smallskip

{\noindent{\textbf{Claim.}}
\[
\Bigg[\intf{}{}{\partial \widehat{F}_{i_0,j}}{\partial{\bf V}^{i_0}_k}(a^*)\Bigg]=\Bigg[\intf{}{}{\partial F_{i_0,j}}{\partial {\bf E}^{i_0}_k}(a^*)\Bigg]=
 \mathbb{I}_{n_{i_0}}+{N},  \vspace{0.3cm}
 \]
where ${N}$ is an upper triangular nilpotent matrix and $\mathbb{I}_{n_{i_0}}$ is the identity
matrix of order $n_{i_0}$. Also, ${\bf E}^{i_0}_k\in\prod_{i=1}^{m}\C^{n_i}$ is a vector whose $i_0$-th component is ${\bf e}^{i_0}_k$ and every other component is the zero vector.}
\smallskip

\noindent{To establish the claim, we begin with the observation
\[
\intf{}{}{\partial F_{i_0,j}}{\partial{\bf E}^{i_0}_k}(a^*)=\lim_{\eps\to 0}\intf{}{}{F_{i_0,j}(a^*+\eps\,{\bf E}^{i_0}_k)-F_{i_0,j}(a^{*})}{\eps}.
\]
Now, by Lemma~\ref{L:princcommdgm}, $F_{i_0,j}=\theta_{i_0,j}\circ\Psi\circ\theta^{-1}$.
Substituting this together with the observation that
$\theta_{i_0,j}(A)=0$ gives us}
\begin{align}
\intf{}{}{\partial F_{i_0,j}}{\partial{\bf E}^{i_0}_k}(a^*)=\lim_{\eps\to 0}\intf{}{}{\theta_{i_0,j}\circ\Psi\circ\theta^{-1}(a^*+\eps\,{\bf E}^{i_0}_k)}{\eps}.
\label{E:upcumm}
\end{align}
Let us now write $\mathbb{I}_n=\mathbb{I}_{n_1}\oplus\dots\oplus\mathbb{I}_{n_m}$. Let $D^{i_0}_k\in M_{n_{i_0}}(\C)$ be the diagonal matrix
defined by
\[
D^{i_0}_k={\rm{diag}}[\omega_1,\omega_2,\dots,\omega_k,0,\dots,0], \ \ \ k\in\intgR{1}{n_{i_0}},
\]
where $\omega_j$'s are the roots of the equation $x^k+1=0$. Consider the matrix $D_k=\oplus_{i=1}^m\,W_i$, where $W_i=0\in M_{n_i}(\C)$, if $i\neq
i_0$ and $W_{i_0}=D^{i_0}_k$. Observe that (when $\eps$ is sufficiently small)
\[
\theta_i(A+{\eps}^{1/k}D_k)\,:=\,
\begin{cases}
\theta_i(A), &\text{if $i\neq i_0$},\\
\eps\,{\bf e}^{i_0}_k, &\text{otherwise}.
\end{cases}
\]
Hence $\theta(A+{\eps}^{1/k}D_k)=a^*+\eps\,{\bf E}^{i_0}_k$ for all $k\in\intgR{1}{n_{i_0}}$.
Substituting this into \eqref{E:upcumm} we get
\begin{align}\label{E:simplified}
\intf{}{}{\partial F_{i_0,j}}{\partial{\bf E}^{i_0}_k}(a^*)=
\lim_{\eps\to 0}\intf{}{}{\theta_{i_0,j}\circ\Psi(A+{\eps}^{1/k}D_k)}{\eps}
=\lim_{s\to 0}\intf{}{}{\theta_{i_0,j}\circ\Psi(A+s\,D_k)}{s^k}.
\end{align}
Since $\Psi(A)=A$ and ${\Psi}'(A)=\mathbb{I}$, for small enough $s$ we can write
\[
\Psi(A+s\,D_k)=A+s\,D_k+\sum_{j\geq2}\,B_js^j,
\]
where $B_j\in M_n(\C),\, j\geq 2$.
This, in particular, implies that
\[
||\Psi(A+sD_k)-(A+sD_k)||_{\rm{op}}=s^2\,M(s),
\]
where $M(s)$ is a continuous function in a neighbourhood of $0$. Now, when $s$ is sufficiently small both $\Psi(A+sD_k)$ and $(A+sD_k)$ lie
in $\mathcal{V}_2\subset {\sf{P}}(A;\,\delta)$ and $\mathcal{V}_1\subset {\sf{P}}(A;\,\delta)$ respectively. Furthermore, the 
non-zero eigenvalues of $A+sD_k$ that lie in the disc $\mathbb{D}(\lam_{i_0};\,r)\equiv\mathbb{D}(0;\,r)$
are $s\omega_1,\dots,s\omega_k$. Notice that the matrices $A+sD_k$ are all diagonal matrices.
So if we denote by $\mu^{i_0}_j(s),\,j\in\intgR{1}{n_{i_0}}$ the eigenvalues of 
$\Psi(A+sD_k)$ that lie in the disc $\mathbb{D}(0;\,r)$ then by Result~\ref{Res:sun}, there exists $\zeta_j(s)\in\D$, $j\in\intgR{1}{n_{i_0}}$ such that
\[
\mu^{i_0}_j(s)\,=\,
\begin{cases}
s\,\omega_j+\zeta_j(s)\,n\,s^2M(s), &\text{if $j\in\intgR{1}{k}$},\vspace{0.2cm}\\
\zeta_j(s)\,n\,s^2\,M(s), &\text{$j\in\intgR{k+1}{n_{i_0}}$}.
\end{cases}
\]
For a fixed $j \ge 1$, let $\mathscr{I}_j$ be the collection of all possible subsets of $\{1,2,\hdots,n_{i_0}\}$ of cardinality $j$ and let 
\[
 \mu_{ I}(s)=\mu_{i_1}(s)\mu_{i_2}(s)\hdots\mu_{i_j}(s),\ \text{where} \ { I}=\{i_1,i_2,\hdots,i_j\} \in\mathscr{I}_j.
 \] 
Then by definition
\[ \theta_{i_0,j}\circ \Psi (A+sD_k)=\sum_{{ I} \in{\mathscr{I}_j}} \mu_{ I}(s).\]
Now note that for $j>k$, $\mu_{ I}(s)=s^{j+1}h_{ I}(s)$, where $h_{ I}(s)$ are continuous functions in $s$ for every ${ I} \in \mathscr{I}_j.$ However,
for $j=k$, $\mu_{ I}(s)=s^j+s^{j+1}h_{ I}(s)$ only if ${ I}=\{1,2,\hdots,j\}$ and 
$\mu_{ I}(s)=s^{j+1} h_{ I}(s)$ otherwise. Thus we have
\[\intf{}{}{\partial F_{i_0,j}}{\partial{\bf E}^{i_0}_k}(a^*)= \lim_{s \to 0} \frac{\theta_{i_0,j}\circ \Psi (A+sD_k)}{s^k}=\begin{cases} 1 &j=k \\ 0 &j>k \end{cases}\]
which establishes the claim and consecutively proves our proposition.
\end{proof}

\subsection{Translation by a scalar matrix}\label{SS:implemma}
The purpose of this subsection is to devise a translation trick which is another main tool in computing the trace of the derivative of the map $F_{\Psi}$ as in Lemma~\ref{L:princcommdgm}. For this purpose, given
$\lam\in\C$, define the translation $L_{\lam}:M_n(\C)\lrarw M_n(\C)$ by $L_{\lam}(W):=W-\lam\mathbb{I}$. Notice $(L_{\lam})^{-1}=L_{-\lam}$. Furthermore, if
$\Omega\subseteq\C$ be any domain then $L_{\lam}(\Sn)=S_n(\Omega_\lam)$, where $\Omega_\lam=\{z-\lam:z\in\Omega\}$. Note that that if $\Omega$ satisfies the cardinality condition $\#(\C\setminus\OM)\geq 2n$ then so does $\OM_\lam$. Observe the map $L_{-\lam}$ introduces a map 
$G_{-\lam}:\Sigma^n(\Omega_\lam)\lrarw
\Sigma^n(\Omega)$ such that $G_{-\lam}\circ\ch=\ch\circ L_{-\lam}$, i.e., the following diagram commutes:
\vspace{-0.3cm}
\begin{figure}[h!]
\begin{equation*}
\begin{tikzcd}
 S_n(\Omega_\lambda) \arrow [rr,"L_{-\lambda}"]  \arrow[d, "\ch"]  & & S_n(\Omega)  \arrow[d, "\ch"] \\
  \Sigma^n(\Omega_\lambda)\arrow[rr, "G_{-\lambda}"]   & & \Sigma^n(\Omega) \\
   \end{tikzcd}
\end{equation*}
\vspace{-1.0cm}
\caption{}
\label{F:comd3}
\end{figure}\vspace{-0.2cm}

\noindent In fact, $G_{-\lam}\big(\pi_n(z_1,\dots,z_n)\big)=\pi_n(z_1+\lam,\dots,z_n+\lam)$, where $\pi_n:\C^n\longrightarrow\C^n$ is the
symmetrization map. Notice also that $G_{-\lam}$ is a biholomorphism, its
inverse $G_{\lam}$ is defined by $G_{\lam}\big(\pi_n(z_1,\dots,z_n)\big)=\pi_n(z_1-\lam,\dots,z_n-\lam)$.
\smallskip

We now define $\mathcal{V}_{j,\,\lam}=L_{\lam}(\mathcal{V}_j), j=1,2$, where $\mathcal{V}_1, \mathcal{V}_2$ are as in
Lemma~\ref{L:princcommdgm}. Then we have the following commutative diagram:
\vspace{-0.3cm}
\begin{figure}[h!]
\begin{equation*}
\begin{tikzcd}
 \mathcal{V}_{1,\,\lambda} \arrow [rr,"L_{-\lambda}"] \arrow[dd,swap,"\ch"] & &\mathcal{V}_1  \arrow[d, "\theta"]   \arrow[rr, "\Psi"]  & &\mathcal{V}_2  \arrow[d, "\theta"] \arrow[rr, "L_{\lambda}"] & & \mathcal{V}_{2,\,\lambda}\arrow[dd, swap,"\ch"]\\
  & & \theta(\mathcal{V}_1)\arrow[d, "\tau"] \arrow[rr, "F_{\Psi}"] & &\theta(\mathcal{V}_2)  \arrow[d, "\tau"]\\
  \ch(\mathcal{V}_{1,\,\lambda}) \arrow[rr,"G_{-\lambda}"]& & \ch(\mathcal{V}_1) \arrow[rr, "G_\Psi"] &  &\ch(\mathcal{V}_2)  \arrow[rr, "G_\lambda"] & &\ch(\mathcal{V}_{2,\,\lambda})\\
   \end{tikzcd}
\end{equation*}
\vspace{-1.0cm}\caption{}
\label{F:comd4}
\end{figure}
\vspace{-0.2cm}

Observe that $\mathcal{V}_{j,\,\lam}$ are neighbourhoods of $A_{\lam}:=A-\lam\mathbb{I}$ of the same type as $\mathcal{V}_j$'s are that of $A$. In other words, if we take
$B\in\mathcal{V}_{j,\,\lam}$ then the spectrum of $B$ is contained in the union of disjoint discs
$\mathbb{D}(\lam_i-\lam,\,r)$ that are centered at $\lam_i-\lam$ with radius $r$,
$i\in\intgR{1}{m}$, where $r>0$ is chosen as in the proof of Lemma~\ref{L:localdecom}. Moreover, the number of eigenvalues of $B$, counted with multiplicity, that lie in the disc $\mathbb{D}(\lam_i-\lam,\,r)$ is $n_i$,
$i\in\intgR{1}{m}$. Proceeding exactly as in the proof of Lemma~\ref{L:localdecom} and Lemma~\ref{L:princcommdgm}, we see that there are maps $\theta_{\lam},\tau_\lam$ and
$F_{-\lam},\,F_{\lam}$ such that the diagram in Figure~\ref{F:comd5} is commutative.
\vspace{-0.3cm}
\begin{figure}[h!]
\begin{equation*}
\begin{tikzcd}
 \mathcal{V}_{1,\,\lambda} \arrow [rr,"L_{-\lambda}"]  \arrow[d, "\theta_{\lambda}"]  & &\mathcal{V}_1  \arrow[d, "\theta"] \arrow[rr, "\Psi"]  & &\mathcal{V}_2  \arrow[d, "\theta"] \arrow[rr, "L_{\lambda}"] & & \mathcal{V}_{2,\,\lambda}\arrow[d,"\theta_{\lambda}"]\\
  \theta_\lambda(\mathcal{V}_{1,\,\lambda}) \arrow[rr, "F_{-\lambda}"] \arrow[d, "\tau_\lambda"]  & & \theta(\mathcal{V}_1)\arrow[d, "\tau"] \arrow[rr, "F_{\Psi}"] & &\theta(\mathcal{V}_2)  \arrow[d, "\tau"] \arrow[rr, "F_{\lambda}"] & & \theta_\lambda(\mathcal{V}_{2,\,\lambda})\arrow[d,"\tau_\lambda"]\\
  \ch(\mathcal{V}_{1,\,\lambda}) \arrow[rr,"G_{-\lambda}"]& & \ch(\mathcal{V}_1) \arrow[rr, "G_\Psi"] & &\ch(\mathcal{V}_2)  \arrow[rr, "G_\lambda"] & &\ch(\mathcal{V}_{2,\,\lambda})\\
   \end{tikzcd}
\end{equation*}
\vspace{-1.0cm}\caption{}
\label{F:comd5}
\end{figure}
\vspace{-0.2cm}

Define the maps $\Psi_\lam\,:=\,L_\lam\circ\Psi\circ L_{-\lam}$, $F_{\Psi_\lam}:=F_\lam\circ F_\Psi\circ F_{-\lam}$ and
$G_{\Psi_\lam}:=G_{\lam}\circ G_{\Psi}\circ G_{-\lam}$ and set $a^{*}_{\lam}:=\theta_\lam(A_\lam)$. 
The following lemma is the translation trick that we alluded to in the start of this subsection:
\smallskip

\begin{lemma}\label{L:diagblocksim}
Let $\mathfrak{B}_{\lam,i}$, $\mathfrak{B}_{0,i}$ be two bases of $\C^{n_i}$, $i\in\intgR{1}{m}$ and consider the bases $\mathfrak{B}_{\lam}$, 
$\mathfrak{B}_{0}$ of $\C^n$ that are defined by
\[
[\mathfrak{B}_{\lam}]=[\mathfrak{B}_{\lam,1}]\oplus\dots\oplus[\mathfrak{B}_{\lam,m}] \ \ \ \text{and} \ \ \
[\mathfrak{B}_{0}]=[\mathfrak{B}_{0,1}]\oplus\dots\oplus[\mathfrak{B}_{0,m}].
\]
Then the $i$-th diagonal block of size $n_i\times n_i$ of the matrices 
$[F'_{\Psi_\lam}(a^{*}_{\lam})]_{\mathfrak{B}_{\lam}}$ and $[F'_{\Psi}(a^{*})]_{\mathfrak{B}_{0}}$ are similar 
for every $i\in\intgR{1}{m}$.
\end{lemma}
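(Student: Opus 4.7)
The plan is to read the derivative relation off Figure~\ref{F:comd5} and show that the similarity implementing it respects the product decomposition $\prod_{i=1}^m\sym{n_i}$. Since $L_\lam\circ L_{-\lam}={\rm id}$ on $M_n(\C)$, the uniqueness underlying the construction in Lemma~\ref{L:princcommdgm} forces $F_\lam$ and $F_{-\lam}$ to be mutually inverse local biholomorphisms near $a^*$ and $a^{*}_{\lam}$. Noting that $F_{-\lam}(a^{*}_{\lam})=\theta(L_{-\lam}(A_\lam))=\theta(A)=a^*$ and $F_\Psi(a^*)=a^*$, applying the chain rule to the defining identity $F_{\Psi_\lam}=F_\lam\circ F_\Psi\circ F_{-\lam}$ would give
\[
F'_{\Psi_\lam}(a^{*}_{\lam})\,=\,T^{-1}\,F'_\Psi(a^*)\,T,\qquad\text{where}\qquad T:=F'_{-\lam}(a^{*}_{\lam}).
\]

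The heart of the argument will be to show that $T$ is block-diagonal with respect to the decomposition $\C^n=\bigoplus_{i=1}^m\C^{n_i}$ coming from the factors $\sym{n_i}$. For this, I would use that $L_{-\lam}$ simply adds $\lam$ to every eigenvalue, so it carries the cluster of $n_i$ eigenvalues of any $B\in\mathcal{V}_{1,\lam}$ lying in $\mathbb{D}(\lam_i-\lam;\,r)$ bijectively onto the cluster of eigenvalues of $L_{-\lam}(B)$ lying in $\mathbb{D}(\lam_i;\,r)$. Comparing the explicit construction of $\theta$ and $\theta_\lam$ in Lemma~\ref{L:localdecom}, this forces $F_{-\lam}$ to split coordinate-wise: on the $i$-th factor it is the biholomorphism taking the coefficients of a monic polynomial of degree $n_i$ to the coefficients of the polynomial whose roots are translated by $+\lam$. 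Consequently $T$ is block-diagonal with blocks $T_i\in M_{n_i}(\C)$.

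Writing $F'_\Psi(a^*)=(M_{ij})_{i,j=1}^m$ and $F'_{\Psi_\lam}(a^{*}_{\lam})=(N_{ij})_{i,j=1}^m$ in the induced block form, the previous display then yields $N_{ii}=T_i^{-1}M_{ii}T_i$ for every $i\in\intgR{1}{m}$; that is, the natural $i$-th diagonal blocks of $F'_\Psi(a^*)$ and $F'_{\Psi_\lam}(a^{*}_{\lam})$ are similar via $T_i$. Since the hypotheses make $[\mathfrak{B}_0]$ and $[\mathfrak{B}_\lam]$ block-diagonal, a direct block-matrix calculation shows that the $i$-th diagonal block of $[F'_\Psi(a^*)]_{\mathfrak{B}_0}$ is $[\mathfrak{B}_{0,i}]^{-1}M_{ii}[\mathfrak{B}_{0,i}]$ while that of $[F'_{\Psi_\lam}(a^{*}_{\lam})]_{\mathfrak{B}_\lam}$ is $[\mathfrak{B}_{\lam,i}]^{-1}N_{ii}[\mathfrak{B}_{\lam,i}]$; both are similar to $M_{ii}$, and hence to each other, completing the proof.

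The main obstacle I expect is pinning down the block-diagonality of $F_{-\lam}$: one must verify that the labelling of eigenvalue clusters chosen in the construction of $\theta_\lam$ around $A_\lam$ is consistent with the labelling chosen for $\theta$ around $A$ under the shift $L_{-\lam}$, so that $F_{-\lam}$ genuinely preserves the product structure factor-by-factor rather than mixing factors via a permutation. Once this matching is secured, the remainder is routine bookkeeping with block matrices.
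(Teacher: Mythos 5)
Your proposal is correct and follows essentially the same route as the paper's proof: apply the chain rule to $F_{\Psi_\lambda}=F_\lambda\circ F_\Psi\circ F_{-\lambda}$, observe that $F_{\pm\lambda}$ act factor-by-factor on $\prod_{i=1}^m\Sigma^{n_i}$ (hence their derivatives are block-diagonal), and finish with block-matrix bookkeeping. The only presentational difference is that you first pass through the standard basis and then conjugate by the block-diagonal matrices $[\mathfrak{B}_{0}]$, $[\mathfrak{B}_{\lambda}]$, whereas the paper works directly with the two-basis matrices $[F'_\lambda]^{\mathfrak{B}_\lambda}_{\mathfrak{B}_0}$ and identifies their block-diagonal form in one step; both phrasings record the same fact, and the consistency of the eigenvalue-cluster labelling under $L_{-\lambda}$ that you flag is indeed built into the construction of $\theta_\lambda$ via the discs $\mathbb{D}(\lambda_i-\lambda;\,r)$.
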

\begin{proof} 
Since $F_{\Psi_\lam}=F_\lam\circ F_\Psi\circ F_{-\lam}$ on $\theta_{\lam}(\mathcal{V}_{1,\,\lam})$, by chain rule, we get\vspace{0.2cm}
\begin{equation}\label{E:flam&f-lam$fpsi}
\Big[F'_{\Psi_\lam}(a^{*}_{\lam})\Big]_{\mathfrak{B}_{\lam}}=
\Big[F'_\lam(a^{*}_{\lam})\Big]^{\mathfrak{B}_{\lam}}_{\mathfrak{B}_{0}}\cdot
\Big[F'_{\Psi}(a^{*})\Big]_{\mathfrak{B}_{0}}\cdot
\Big[F'_{-\lam}(a^{*}_{\lam})\Big]^{\mathfrak{B}_{0}}_{\mathfrak{B}_{\lam}}\vspace{0.2cm}.
\end{equation}
As noted in the beginning of this section $G_{\lam},\,G_{-\lam}$ are biholomorphisms and consequently, so are $F_{\lam}$, $F_{-\lam}$. Also, observe 
if we write $F_\lam=(F_{\lam,\,1},\dots,F_{\lam,\,m})$ and $F_{-\lam}=(F_{-\lam,\,1},\dots,F_{-\lam,\,m})$ then for every $x=(x_1,\dots,x_m)$ and $i\in
\intgR{1}{m}$, the values $F_{\lam,\,i}(x),\, F_{-\lam,\,i}(x)$ are independent of $x_j$, $j\neq i$. It follows from these two facts and from our choice of bases
$\mathfrak{B}_{\lam}$, $\mathfrak{B}_{0}$ that there exist invertible matrices $E_{\lam,\,i}$ of order $n_i$ such that
\begin{equation}\label{E:flaminv}
\Big[F'_\lam(a^{*}_{\lam})\Big]^{\mathfrak{B}_{\lam}}_{\mathfrak{B}_{0}}=E_{\lam,\,1}\oplus\dots\oplus E_{\lam,\,m}.
\end{equation}
Notice that since $F_{\lam}\circ F_{-\lam}\equiv \mathbb{I}$ on a small neighbourhood of $a^{*}_{\lam}$, we have
\begin{equation}\label{E:f-laminv}
\Big[F'_{-\lam}(a^{*}_{\lam})\Big]^{\mathfrak{B}_{0}}_{\mathfrak{B}_{\lam}}=
E^{-1}_{\lam,\,1}\oplus\dots\oplus E^{-1}_{\lam,\,m}.
\end{equation}
The formula for multiplying block matrices together with
\eqref{E:flam&f-lam$fpsi}, \eqref{E:flaminv} and \eqref{E:f-laminv} proves the result.
\end{proof}

\section{The proof of Theorem~\ref{T:mainT1}}\label{S:diagcase}
This section is devoted to the proofs of Theorem~\ref{T:mainT1} and Corollary~\ref{cor:locconj}. 
We use tools and techniques developed in 
Section~\ref{S:Prep} to prove Theorem~\ref{T:mainT1} when $A$ is diagonalizable. When $A$ is non-derogatory,
our proof of the theorem crucially depends on an explicit construction of a right inverse
of the map $\ch$ passing through the point $A$\,---\,which is independent of Section~\ref{S:Prep}.
But first we prove an important lemma.
\begin{lemma}\label{L:probconjinva}
Consider $\Omega\subset\C$ and $n\in\nat,\,n\geq 2$, such that $\#(\C\setminus\Omega)\geq 2n$. 
Let $A\in\Sn$ and $\Psi\in\hol(\Sn,\,\Sn)$ be such that $\Psi(A)=A$ and 
$\Psi'(A)=\mathbb{I}$. Let $B\in M_n(\C)$ be such that $B=S\,A\,S^{-1}$ for some invertible matrix
$S\in M_n(\C)$. Consider $\Phi\in\hol(\Sn,\,M_n(\C))$
defined by $\Phi={C_{S}}^{-1}\circ\Psi\circ {C_{S}}$, where $C_{S}(W)=S^{-1}\,W\,S$ for all $W\in M_n(\C)$. Then 
\begin{itemize}
\item[$1)$] $\Phi\in\hol(\Sn,\,\Sn)$ with $\Phi(B)=B$ and $\Phi'(B)=\mathbb{I}$.
\smallskip

\item[$2)$] Let $G_{\Phi}\in\hol(\Sigma^n(\Omega),\,\Sigma^n(\Omega))$ be the map associated to the map
$\Phi$ (see Lemma~\ref{L:indmapsymprod}). Then $G_{\Phi}\equiv G_{\Psi}$.
\end{itemize}
\end{lemma}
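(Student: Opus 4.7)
The proof should be a short, direct verification, relying on the fact that the conjugation map $C_S$ is a spectrum-preserving linear automorphism of $M_n(\C)$. My plan is first to establish the basic properties of $C_S$, then deduce part (1) from the chain rule together with the fact that $C_S(B)=A$, and finally deduce part (2) from the uniqueness clause in Lemma~\ref{L:indmapsymprod}.

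For part (1), I first note that $C_S:M_n(\C)\to M_n(\C)$ is $\C$-linear and invertible with $C_S^{-1}(W)=SWS^{-1}$. Since similar matrices share the same characteristic polynomial (hence the same spectrum), both $C_S$ and $C_S^{-1}$ restrict to holomorphic self-maps of $\Sn$. Thus $\Phi=C_S^{-1}\circ\Psi\circ C_S$ is a holomorphic self-map of $\Sn$. Now $C_S(B)=S^{-1}(SAS^{-1})S=A$, so $C_S^{-1}(A)=B$, and hence
\[
\Phi(B)\,=\,C_S^{-1}\bigl(\Psi(C_S(B))\bigr)\,=\,C_S^{-1}\bigl(\Psi(A)\bigr)\,=\,C_S^{-1}(A)\,=\,B.
\]
Because $C_S$ is linear, its derivative at every point equals $C_S$ itself (and similarly for $C_S^{-1}$). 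The chain rule, combined with the hypothesis $\Psi'(A)=\mathbb{I}$, then gives
\[
\Phi'(B)\,=\,C_S^{-1}\circ\Psi'(A)\circ C_S\,=\,C_S^{-1}\circ C_S\,=\,\mathbb{I}.
\]

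For part (2), the key observation is that $\ch$ is invariant under conjugation, i.e. $\ch\circ C_S=\ch=\ch\circ C_S^{-1}$ on $M_n(\C)$, since the characteristic polynomial is a similarity invariant. Consequently,
\[
\ch\circ\Phi\,=\,\ch\circ C_S^{-1}\circ\Psi\circ C_S\,=\,\ch\circ\Psi\,=\,G_{\Psi}\circ\ch.
\]
On the other hand, by Lemma~\ref{L:indmapsymprod} applied to $\Phi$, the map $G_{\Phi}$ is the unique holomorphic self-map of $\Sigma^n(\Omega)$ satisfying $G_{\Phi}\circ\ch=\ch\circ\Phi$. Since $\ch:\Sn\to\Sigma^n(\Omega)$ is surjective, the identity $G_{\Psi}\circ\ch=G_{\Phi}\circ\ch$ forces $G_{\Phi}\equiv G_{\Psi}$ on $\Sigma^n(\Omega)$.

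There is no genuine obstacle here; the whole argument rests on the elementary fact that conjugation preserves the characteristic polynomial. The only thing to be slightly careful about is invoking the uniqueness part of Lemma~\ref{L:indmapsymprod} correctly, which requires only the surjectivity of $\ch$ onto $\Sigma^n(\Omega)$ — a fact already established in the introduction.
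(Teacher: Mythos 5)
Your proof is correct and follows essentially the same route as the paper: both rest on the identity $\ch\circ C_S=\ch=\ch\circ C_S^{-1}$, with your version packaging the pointwise calculation into an appeal to the uniqueness clause of Lemma~\ref{L:indmapsymprod} rather than chasing a single arbitrary point $x$ through the diagram as the paper does.
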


\begin{proof} The proof of Part~$(1)$ is straightforward. So let us prove Part~$(2)$. Choose a point 
$x\in \Sigma^n(\Omega)$ and fix it. 
Let $W\in\Sn$ be such that $\ch(W)=x$. Then using the identity $G_{\Phi}\circ\ch=\ch\circ\Phi$
we see that $G_{\Phi}(x)=\ch(\Phi(W))=
\ch({C_{S}}^{-1}\circ\Psi\circ {C_{S}}(W))$. Notice that $\ch\circ{C_{S}}^{-1}=\ch$ and hence we have
$G_{\Phi}(x)=\ch(\Psi\circ C_{S}(W))$. Now, we use the identity $G_{\Psi}\circ\ch=\ch\circ\Psi$, to get 
$\ch(\Psi\circ C_{S}(W))=G_{\Psi}(\ch(C_{S}(W))$.
Since $\ch\circ C_{S}=\ch$, we have $G_{\Phi}(x)=G_{\Psi}(\ch(W))=G_{\Psi}(x)$.
As $x$ is arbitrary, the conclusion follows.
\end{proof}

We are now ready for 

\begin{proof}[The proof of Theorem~\ref{T:mainT1}]
\noindent{\bf{When $A$ is diagonalizable:}}
Let $A\in\Sn$ be a diagonalizable matrix and $\Psi\in\hol(\Sn,\,\Sn)$ be such that $\Psi(A)=A$ and 
$\Psi'(A)=\mathbb{I}$. Because of Lemma~\ref{L:probconjinva}, we can assume, without loss of generality, that $A$ is a diagonal 
matrix with eigenvalues $\lam_i$ with algebraic multiplicity $n_i$, $i\in\intgR{1}{m}$. We shall now compute
the trace of $G'_{\Psi}$ at the point $a=\ch(A)$. Invoking Lemma~\ref{L:princcommdgm}, it is necessary and
sufficient to compute the trace of $F'_{\Psi}$ at $a^{*}:=\theta(A)$, where $F_{\Psi}$ and $\theta$ are as in 
Lemma~\ref{L:princcommdgm}.
\smallskip

Consider a basis $\mathfrak{B}$ of $\C^n$ of the form $[\mathfrak{B}]=[\mathfrak{B}_{1}]\oplus\dots\oplus[\mathfrak{B}_{m}]$, where 
$\mathfrak{B}_{i}$ is a basis of $\C^{n_i}$. Furthermore, for $\mathfrak{B}_i:=\{{\bf v}^i_j:i\in\intgR{1}{m},\, j\in\intgR{1}{n_i}\}$, let us
denote by ${\bf V}^{i}_j\in\prod_{k=1}^{m}\C^{n_k}$ the vector defined
by ${\bf V}^{i}_j:=
\big({\bf V}^{i}_{j,\,1},\dots,{\bf V}^{i}_{j,\,k},\dots,{\bf V}^{i}_{j,\,m}\big)\in\prod_{k=1}^{m}\C^{n_k}$
such that
${\bf V}^{i}_{j,\,k}=\bf{0}$ when
$k\neq i$ and ${\bf V}^{i}_{j,\,i}={\bf v}^{i}_j$.
If we represent $F_{\Psi}=(F^{1}_{\Psi},\dots,F^{m}_{\Psi})$, we can write
$F^{i}_{\Psi}=\sum_{j=1}^{n_i}F^{i}_{\Psi,\,j}{\bf v}^{i}_j$. Notice that the matrix of the derivative of $F_{\Psi}$ at $a^{*}$ with respect to the 
basis $\mathfrak{B}$ is
\[
\Big[F'_{\Psi}(a^{*})\Big]_{\mathfrak{B}}=\oplus_{i=1}^{m}\Bigg[\intf{}{}{\partial F^{i}_{\Psi,\,j}}{\partial{\bf V}^{i}_k}(a^*)\Bigg]
\ \ \ \text{and} \ \ \ {\rm{trace}}\Big[F'_{\Psi}(a^{*})\Big]_{\mathfrak{B}}=\sum_{i=1}^m{\rm{trace}}
\Bigg[\intf{}{}{\partial F^{i}_{\Psi,\,j}}{\partial{\bf V}^{i}_k}(a^*)\Bigg].
\]
Now, we prove the following claim:
\smallskip

\noindent{\textbf{Claim.}} The trace of $F'_{\Psi}$ at $a^{*}$ with respect to the basis $\mathfrak{B}$ is $n$.
\smallskip

\noindent{To see the claim, fix $i\in\intgR{1}{m}$. We now consider the map $\Psi_{\lam_i}$ and the associated maps $F_{\Psi_{\lam_i}}$ and $G_{\Psi_{\lam_i}}$ that are obtained by replacing $\lam$ with $\lam_i$ in the definition of maps $\Psi_{\lam},\,G_{\Psi_{\lam}},\,F_{\Psi_{\lam}}$ introduced just before 
Lemma~\ref{L:diagblocksim}. 
Notice that since $A$ is a diagonal matrix so is $A-\lam_i\I$. Moreover, $\sigma(A-\lam_i\I)=\{\mu_1,\dots,
\mu_m\}$ where $\mu_j=\lam_j-\lam_i$ for all $j\in\intgR{1}{m}$. In particular $\mu_i=0$. 
We now use Proposition~\ref{P:pincpropdiag} by taking $A-\lam_i\I$ as $A$, $\Psi_{\lam_i}$ as $\Psi$ 
and $F_{\Psi_{\lam_i}}$ as $F_{\Psi}$. Thus
there exists a basis $\mathfrak{D}_{i}$ of $\C^{n_i}$
such that with respect to the basis $\mathfrak{D}$ of $\C^n$ defined by $[\mathfrak{D}]\,=\,[\mathfrak{D}_{1}]\oplus\dots\oplus[\mathfrak{D}_{m}]$, the $i$-th diagonal block of order $n_i$ of the 
matrix $\big[F'_{\Psi_{\lam_i}}(a^*_{\lam_i})\big]_{\mathfrak{D}}$ has trace $n_{i}$. 
By Lemma~\ref{L:diagblocksim}, this latter block is similar to 
the $i$-th diagonal block of $\big[F'_{\Psi}(a^*)\big]_{\mathfrak{B}}$ and hence 
\[
 {\rm{trace}}\,\Bigg[\intf{}{}{\partial F^{i}_{\Psi,\,j}}{\partial{\bf V}^{i}_k}(a^*)\Bigg]=n_i.
\] 
Since the above is true for each $i\in\intgR{1}{m}$, the claim follows.}  
\smallskip

As mentioned in the first paragraph, the claim implies that the trace of $G'_{\Psi}$ at $a$ is equal to $n$.
Notice, since $\#(\C\setminus\OM)\geq 2n$, by Result~\ref{Res:kobhypsymprod} we know $\Sigma^n(\Omega)$ is a
Kobayashi complete domain and hence it is taut. So part~$(a)$ of Result~\ref{Res:CCT} implies that each eigenvalue of $G'_{\Psi}(a)$ lies in $\overline{\D}$. These two facts together imply that each eigenvalue of $G'_{\Psi}(a)$ is equal to $1$.
By Result~\ref{Res:iterconv}, the sequence $\{G^{k}_{\Psi}\}$ converges in $\hol(\Sigma^n(\Omega),\,\Sigma^n(\Omega))$ to a holomorphic retraction
$\rho:\Sigma^n(\Omega)\lrarw\Sigma^n(\Omega)$. Of course, the holomorphic retract $\rho(\Sigma^n(\Omega))$, 
which is a closed submanifold of $\Sigma^n(\Omega)$, is fixed point-wise by the map $G$. 
Now, by Corollary~\ref{Cor:unitspacetangspaceoflimitmani}, we know that the dimension of the retract is equal 
to the number of eigenvalues of $G'_{\Psi}(a)$ that belong to the boundary of $\D$. Since the latter is equal to $n$, 
it follows that $\rho(\Sigma^n(\Omega))=\Sigma^n(\Omega)$
whence it follows that $G_{\Psi}$ is the identity map on $\Sigma^n(\Omega)$.
This establishes the conclusion of the theorem when $A$ is diagonalizable.%
\medskip

\noindent{\bf{When $A$ is non-derogatory:}}
Since the matrix $A$ is non-derogatory, $A$ is similar to
the companion matrix of its characteristic polynomial; see \cite[p.195]{hornJohn:matanal85}.
Also, recall the map $\kappa:\Sigma^n(\Omega)\lrarw\Sn$, a right inverse of the map $\ch$,
as in the proof of Lemma~\ref{L:indmapsymprod}. Using $\kappa$, if $a=(a_1,\dots,a_n)=\ch(A)$ then $A$ is non-derogatory if and only if $A$
is similar to
\[
\kappa(a)=
\begin{bmatrix}
\ 0  & {} & {} & -a_k \ \\
\ 1  & 0  & {} & -a_{k-1} \ \\
\ {} & \ddots  & \ddots & \vdots \ \\
\ \text{\LARGE{0}} &   & 1 & -a_{1} \
\end{bmatrix}_{n\times n}.
\]
Owing to Lemma~\ref{L:probconjinva}, we can assume that $A=\kappa(a)=\kappa(\ch(A))$.
Recall that the map $G_{\Psi}$ associated with the map $\Psi$ is given by $G_{\Psi}=\ch\circ\Psi\circ\kappa$
(see the proof of Lemma~\ref{L:indmapsymprod}).
Note that $G_{\Psi}(a)=a$ and $G'_{\Psi}(a)=\ch'(\Psi(\kappa(a)))\circ\Psi'(\kappa(a))\circ\kappa'(a)$. Since
$\Psi(A)=A$ and $\Psi'(A)=\I$, we get $G'_{\Psi}(a)=\ch'(A)\circ\kappa'(a)$.
The map $\kappa$ is a right inverse of $\ch$ passing through $A$, thus $\ch'(A)\circ\kappa'(a)=\I$ whence it 
follows that $G'_{\Psi}(a)=\I$. We now invoke part~$(b)$ of Result~\ref{Res:CCT} to 
conclude that $G_{\Psi}$ is the identity map on $\Sigma^n(\Omega)$.
\end{proof}

We end this section with the proof of Corollary~\ref{cor:locconj} but before that we state a result. 
\begin{result}[Baribeau--Ransford, \cite{BaribeauRansford:specpres00}]\label{Res:BariRansford}
Let $\mathcal{U}$ be an open subset of $M_n(\C)$ and let $\Psi:\mathcal{U}\lrarw M_n(\C)$ be a 
spectrum-preserving \,$\mathcal{C}^1$-diffeomorphism of \,$\mathcal{U}$ onto $\Psi(\mathcal{U})$. Then 
$\Psi(W)$ is conjugate to $W$ for any $W\in\mathcal{U}$.
\end{result}
Also, see \cite[Th\'{e}or\`{e}me~2]{BaribeauRoy:speccharofJordanform00} for an analytic version of the above result. We now present

\begin{proof}[The proof of Corollary~\ref{cor:locconj}]
Notice, by Theorem~\ref{T:mainT1}, $\Psi$ is spectrum-preserving. Since $\Psi'(A)=\I$, by the inverse function theorem, there exists a neighbourhood $\mathcal{N}$ of $A$ such that
$\Psi:\mathcal{N}\lrarw\Psi(\mathcal{N})$ is a biholomorphism. Now, the corollary follows
from Result~\ref{Res:BariRansford}. 
\end{proof}

\section{Computation of the rank of $\ch'(A)$}\label{S:rankcomp}
In this section, we shall present the proof of Proposition~\ref{P:rankofcharmap}. As we shall see, a key tool in our proof is the local decomposition of 
the map $\ch$ as described in Lemma~\ref{L:localdecom}. 
In what follows, given integers $j< k$, ${\intgR{j}{k}}^2$ denotes the cartesian product of ${\intgR{j}{k}}$ 
with itself. We begin with the case when the matrix $A$ is a nilpotent matrix.

\begin{lemma}\label{L:ranknil}
Given $p\in\nat,\,p\geq 2$, let $A\in M_p(\C)$ be a nilpotent matrix. Then the rank of $\ch'(A)$ is equal to the degree of the minimal polynomial of $A$. 
\end{lemma}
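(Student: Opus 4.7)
The plan is to compute $c_k'(A)$ explicitly as a linear functional on $M_p(\C)$ for each $k$, and then read off the rank of $\ch'(A)$ as the dimension of the span of these functionals in $M_p(\C)^*$.

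First I would differentiate the polynomial identity
\[
\det(sI - (A+tH)) = s^p + \sum_{k=1}^{p}(-1)^k c_k(A+tH)\,s^{p-k}
\]
with respect to $t$ at $t = 0$; combining this with the standard formula $\partial_t \det(M - tH)|_{t=0} = -\operatorname{tr}(\operatorname{adj}(M)\,H)$ produces the identity of polynomials in $s$
\[
\sum_{k=1}^{p}(-1)^k c_k'(A)(H)\,s^{p-k} = -\operatorname{tr}\bigl(\operatorname{adj}(sI - A)\,H\bigr).
\]
Next I would exploit the nilpotency of $A$: if $k_1$ denotes the degree of the minimal polynomial of $A$, then $A^{k_1} = 0$, and the Neumann expansion $(sI - A)^{-1} = \sum_{j\geq 0} A^j/s^{j+1}$ terminates. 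Multiplying by $\det(sI - A) = s^p$ (first for $s$ outside the spectrum of $A$ and then extending as a polynomial identity in $s$) gives
\[
\operatorname{adj}(sI - A) = \sum_{j=0}^{k_1 - 1} s^{p - 1 - j}\,A^j.
\]
Matching coefficients of $s^{p-k}$ on both sides of the earlier displayed identity then yields the clean formula
\[
c_k'(A)(H) =
\begin{cases}
(-1)^{k+1}\,\operatorname{tr}(A^{k-1} H), & 1 \leq k \leq k_1,\\
0, & k_1 < k \leq p.
\end{cases}
\]

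With this formula in hand, the rank of $\ch'(A)$ equals the dimension of the span of $\{c_k'(A)\}_{k=1}^{p}$ in $M_p(\C)^*$. Only the first $k_1$ functionals are nonzero, and the non-degeneracy of the trace pairing $(X, H) \mapsto \operatorname{tr}(XH)$ on $M_p(\C)$ identifies their span with the span of $\{I, A, A^2, \ldots, A^{k_1 - 1}\}$ in $M_p(\C)$. These $k_1$ matrices are linearly independent precisely because $k_1$ is the degree of the minimal polynomial of $A$, so the rank of $\ch'(A)$ equals $k_1$, as required.

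The main point requiring some care is the polynomial identity for $\operatorname{adj}(sI - A)$, but the nilpotency hypothesis makes the relevant expansion finite and explicit; once that identity is in place, the remainder of the argument is a short and purely linear-algebraic consequence of the non-degeneracy of the trace form. Note that, in contrast to the general case of Proposition~\ref{P:rankofcharmap}, the nilpotent case does not require the local decomposition of Lemma~\ref{L:localdecom}, since there is only one eigenvalue cluster and $\tau$ is trivially the identity.
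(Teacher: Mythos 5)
Your proof is correct, and it takes a genuinely different route from the paper. The paper first reduces to Jordan canonical form via similarity invariance of the rank of $\ch'$, then argues block-by-block: it exhibits explicit matrices $E_{j,k}$ in the kernel of $\ch'(A)$ (those living in the off-diagonal blocks), shows that perturbations inside a diagonal Jordan block of size $r$ land in the coordinate subspace $\{z \in \C^p : z_\nu = 0 \text{ for } \nu > r\}$, and finally uses companion-matrix perturbations to realize the whole of $\{z : z_\nu = 0 \text{ for } \nu > r_m\}$ as the image. Your argument bypasses the Jordan form entirely: the Jacobi formula plus the terminating Neumann series give the closed form $c_k'(A)(H)=(-1)^{k+1}\operatorname{tr}(A^{k-1}H)$ for $k\leq k_1$ and $c_k'(A)=0$ otherwise, after which the rank reads off directly from nondegeneracy of the trace pairing and the linear independence of $I, A, \dots, A^{k_1-1}$. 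Your route is shorter, coordinate-free, and makes the answer transparent; the paper's block-wise argument, by contrast, sets up exactly the machinery (perturbing individual Jordan blocks, tracking which coordinates of $\ch$ can move) that it then reuses in the general proof of Proposition~\ref{P:rankofcharmap}, where it glues the nilpotent pieces together through the local factorization $\ch = \tau\circ\theta$. Your approach could be adapted to the general case as well, replacing the terminating geometric series by the Faddeev--LeVerrier coefficients of $\operatorname{adj}(sI-A)$, though one would then have to argue that those coefficient matrices span $\C[A]$; the paper's reduction via $\theta$ avoids that extra step.
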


\begin{proof} Note that for any $S\in M_p(\C)$ that is invertible, we have $\ch'(A)H=\ch'(S^{-1}AS)(S^{-1}HS)$.
It follows from this that the rank of $\ch'$ is similarity invariant. Hence
we shall assume $A$ to be in a Jordan canonical form. More precisely, we write
\[
 A=J_1(0)\oplus\dots\oplus J_m(0),
\]
where for each $i\in\intgR{1}{m},\,J_i(0)$ is a Jordan block of size $r_i$ with eigenvalue $0$
such that $r_1\leq r_2\leq\dots\leq r_m$.
Observe the degree of the minimal polynomial of $A$ is $r_m$. For each pair 
of indices $(j,\,k)\in{\intgR{1}{p}}^2$, let us denote by $E_{j,\,k}\in M_{p}(\C)$ the matrix
whose $(j,\,k)$-th entry is $1$ and every other entry is $0$.
\smallskip

\noindent{\bf Claim.} Fix a $(j,k)$ such that $(j,k)\notin{\intgR{r_i+1}{r_i+r_{i+1}}}^2$ for every $i\in\intgR{0}{m-1}$,
and where $r_0:=0$. Then $\ch(A+\eps\,E_{j,\,k})=\ch(A)$.
\smallskip

\noindent To see the claim, first consider the case when $A$ consists of only two Jordan blocks, i.e.,
$m=2$. In this case, if we change the matrix $A$ to $A+\eps\,E_{j,\,k}$ with $(j,\,k)$ as in the claim
then the Jordan blocks are unaffected and only one of the cross-diagonal blocks
of $A$\,---\,which are $0$ blocks\,---\,gets changed. The claim now easily follows from the following 
formula for the determinant of a block matrix:
\[
  \det\left[\begin{array}{ c c }
    A & B \\
    
    0 & D
  \end{array}\right]
  =\det(A)\,\det(D)=
 \det \left[\begin{array}{ c  c }
    A & 0 \\
    
    C & D
  \end{array}\right].
\]
The general case now follows from the principle of mathematical induction on the number of blocks,
together with the formulas above. The upshot of the above claim is that for each
$(j,\,k)\notin\intgR{r_i+1}{r_i+r_{i+1}}^2$ for every $i\in\intgR{0}{m-1}$,
the matrices $E_{j,\,k}$ belong to the kernel of $\ch'(A)$. 
\smallskip

Now, let $(j,k)\in\intgR{r_{k_0}+1}{r_{k_0}+r_{k_0+1}}^2$ for some fixed $k_0\in\intgR{0}{m-1}$.
Then the perturbed matrix $A+\eps E_{j,k}$ is such that the only block of $A$ that gets changed is the
$(k_0+1)$-th Jordan block of size $r_{k_0+1}$. Now, by the very definition of $\ch$, we have
\[
\det\big(t\I-(A+\eps\,E_{j,\,k})\big)=t^p+\sum_{\nu=1}^{p}(-1)^{\nu}\ch_{\nu}(A+\eps\,E_{j,\,k})\,t^{p-\nu}.
\]
On the other hand, we have
\[
\det\big(t\,\I-(A+\eps\,E_{j,\,k})\big)=t^{(p-r_{k_0+1})}\,\det\big(t\,\I-J_{k_0+1}(\eps)\big),
\]
where $J_{k_0+1}(\eps)$ is obtained from $J_{k_0+1}(0)$ by adding $\eps$ to its $(j,k)$-th entry
and keeping every other entry fixed. Observe that $\det\big(t\,\I-J_{k_0+1}(\eps)\big)$
is a monic polynomial of degree $r_{k_0+1}$. Hence the coefficient of the term $t^{p-\nu}$ in 
$\det\big(t\,\I-(A+\eps\,E_{j,\,k})\big)$, when
$\nu>r_{k_0+1}$, are all $0$. This implies that $\ch(A+\eps\,E_{j,k})$ is a point in $\C^{p}$, whose $\nu$-th
coordinate is zero for every $\nu>r_{k_0+1}$.
Therefore, for each $k_0\in\intgR{0}{m-1}$,
 and for each $(j,k)\in{\intgR{r_{k_0}+1}{r_{k_0}+r_{k_0+1}}}^2$,
$\ch(A+\eps\,E_{j,k})$ is
a point in $\C^{p}$, all of whose $\nu$-th coordinates are zero when $\nu>r_m$. This shows that 
${\rm rank}(\ch'(A))\leq r_m$. Next, we shall prove the converse of this inequality. 
\smallskip

Consider the matrix $B:=\oplus_{i=1}^m B_i$, where each $B_i$ is a companion matrix similar to $J_i(0)$.
Then $B$ is similar to $A$. Now consider
$H\in M_p(\C)$ and write $H=[H_1,\dots,H_p]$, where $H_i$'s are columns of $H$. We choose $H$
in such a way that $H_i$'s are all zero columns when 
$i\neq p$ and writing $H_p=(h_{p},\dots, h_{1})^{T}$, we have $h_{j}=0$ when $j>r_m$. Then we have 
\[
\det\big(t\,\I-(B+H)\big)=t^{p-r_m}\,\det\big(t\,\I-\widetilde{B}_m\big),
\]
where $\widetilde{B}_m$ is a companion matrix of order $r_m$ whose last column is the vector
$(h_{r_m},\dots,h_1)^T$. Hence we have 
$\det\big(t\,\I-\widetilde{B}_m\big)=t^{r_m}+\sum_{j=1}^{r_m}(-h_j)\,t^{r_m-j}$. This, together with the above equation implies 
\[
\det\big(t\,\I-(B+H)\big)=t^{p}+\sum_{j=1}^{r_m}(-h_j)\,t^{p-j}
\]
whence $\ch(B+H)=(h_1,\dots,(-1)^{r_m-1}h_{r_m},0,\dots,0)$. Now, since $\ch(B)=0\in\C^p$,
the subspace of $\C^p$, $\{z\in\C^p\,:\,z=(z_1,\dots,z_p)\,:\,z_j=0,\, j>r_m\}$ is contained in the range space of
$\ch'(B)$. Hence ${\rm{rank}}(\ch'(B))={\rm{rank}}(\ch'(A))\geq r_m$.
\end{proof}

The next lemma says that the rank of $\ch'$ is invariant under translation by a scalar matrix.

\begin{lemma}\label{L:rankinv}
Let $A\in M_n(\C)$ be given and let $\lam\in\C$ be a fixed complex number. Then the rank of $\ch'(A)$ is equal to the rank of $\ch'(A_{\lam})$, where
$A_{\lam}=A-\lam\,\I$. 
\end{lemma}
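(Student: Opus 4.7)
The plan is to observe that translating a matrix by a scalar multiple of the identity corresponds, on the level of characteristic polynomial coefficients, to an explicit polynomial change of variables, and that this change of variables is globally invertible on $\C^n$. Differentiating the intertwining identity then forces $\ch'(A)$ and $\ch'(A_\lam)$ to differ by an invertible factor.

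More concretely, for each $\lam\in\C$, I would define $\Phi_{\lam}:\C^n\lrarw\C^n$ by declaring that $\Phi_{\lam}(a)=b$ is the unique point such that $P_{b}(t)=P_{a}(t+\lam)$, where as before $P_a(t)=t^n+\sum_{j=1}^n(-1)^ja_j\,t^{n-j}$. Expanding $P_a(t+\lam)$ by the binomial theorem shows that each component of $\Phi_{\lam}(a)$ is a polynomial in $(a_1,\dots,a_n)$ (and in $\lam$). Since $P_a(t+\lam-\lam)=P_a(t)$, one has $\Phi_{-\lam}\circ\Phi_{\lam}=\Phi_{\lam}\circ\Phi_{-\lam}=\I$, so $\Phi_{\lam}$ is a biholomorphism of $\C^n$; in particular $\Phi_{\lam}'(z)$ is invertible at every $z\in\C^n$. (This is really the global counterpart of the map $G_{\lam}$ introduced in Subsection~\ref{SS:implemma}.)

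Next, a direct computation with characteristic polynomials gives
\[
 \det\!\big(t\,\I-(A-\lam\,\I)\big)\,=\,\det\!\big((t+\lam)\,\I-A\big)\,=\,P_{\ch(A)}(t+\lam)\,=\,P_{\Phi_{\lam}(\ch(A))}(t),
\]
so $\ch(A-\lam\,\I)=\Phi_{\lam}(\ch(A))$. In other words, the intertwining relation
\[
 \ch\circ L_{\lam}\,=\,\Phi_{\lam}\circ\ch
\]
holds on all of $M_n(\C)$, where $L_{\lam}(W)=W-\lam\,\I$ as in Subsection~\ref{SS:implemma}.

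Finally, applying the chain rule to this identity at $W=A$ and using $L'_{\lam}\equiv\I$ yields
\[
 \ch'(A_{\lam})\,=\,\Phi'_{\lam}(\ch(A))\circ\ch'(A).
\]
Since $\Phi'_{\lam}(\ch(A))$ is an invertible linear map of $\C^n$, composing with it does not alter the rank of $\ch'(A)$. Hence ${\rm rank}\,\ch'(A_{\lam})={\rm rank}\,\ch'(A)$, as required. I do not foresee any real obstacle here: the only substantive point is to recognise that the translation action lifts to a global biholomorphism of $\C^n$ intertwining $\ch$, after which the lemma is just the chain rule.
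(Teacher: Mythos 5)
Your proof is correct and is essentially the same as the paper's: both exploit that the translation $W\mapsto W-\lam\I$ intertwines with $\ch$ via a biholomorphism of the target (your $\Phi_\lam$ is exactly the paper's $G_\lam$, the inverse of $G_{-\lam}$), after which the chain rule and invertibility of the derivative give the rank equality. The only cosmetic difference is that you phrase $\Phi_\lam$ as a global polynomial automorphism of $\C^n$ with polynomial inverse, whereas the paper first fixes a bounded domain $\OM\supset\sigma(A)$ and works with $G_{-\lam}:\Sigma^n(\OM_\lam)\to\Sigma^n(\OM)$; your global viewpoint is a little cleaner since it avoids the detour through $\OM$.
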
 
\begin{proof} Choose a bounded domain $\OM$ that contains $\sigma(A)$ and fix it. Recall that
$\OM_{\lam}:=\{z-\lam\,:\,z\in\OM\}$. Then $L_{\lam}(\Sn)=
S_n(\Omega_{\lam})$, where $L_{\lam}(W)=W-\lam\,\I$ for all $W\in M_n(\C)$. Consider
$L_{-\lam}\in\hol(S_n(\Omega_{\lam}),\,\Sn)$, the inverse of $L_{\lam}$, which is a biholomorphism that 
maps $A_\lam$ to $A$. As noted earlier, (see Figure~\ref{F:comd3} in Subsection~\ref{SS:implemma}) there exists a holomorphic map $G_{-\lam}:\Sigma^n(\Omega_\lam)\lrarw
\Sigma^n(\Omega)$ such that
\[ 
G_{-\lam}\circ\ch=\ch\circ L_{-\lam}.
\]
In fact, $G_{-\lam}\big(\pi_n(z_1,\dots,z_n)\big)=\pi_n(z_1+\lam,\dots,z_n+\lam)$, where $\pi_n:\C^n\longrightarrow\C^n$ is the
symmetrization map. In partcular, $G_{-\lam}$ is a biholomorphism. Now taking the derivatives of both sides in 
the above equation at $A_{\lam}$ we get
\[
G'_{-\lam}(\ch(A_{\lam}))\circ\ch'(A_{\lam})=\ch'(A)\circ L_{-\lam}'(A_{\lam}).
\]
Notice that $ L_{-\lam}'(A_{\lam})=\I$ and hence $G'_{-\lam}(\ch(A_{\lam}))\circ\ch'(A_{\lam})=\ch'(A)$. Since 
$G'_{-\lam}(\ch(A_{\lam}))$ is an invertible linear transformation, it follows that that ${\rm{rank}}(\ch'(A_{\lam}))={\rm{rank}}(\ch'(A))$.
\end{proof}

We are now ready for

\begin{proof}[The proof of Proposition~\ref{P:rankofcharmap}] We shall work in the setting of 
Lemma~\ref{SS:locdecochi}.
Denote by $\lam_i,\,i\in\intgR{1}{m}$
the eigenvalues of $A$ having algebraic multiplicity $n_i$. 
Then Lemma~\ref{SS:locdecochi} says that there is a polydisc ${\sf{P}}(A;\,\delta)$ such that 
$\ch$ decomposes as $\ch\,=\,\tau\circ\theta$, where $\theta:{\sf{P}}(A;\,\delta)\lrarw\prod_{i=1}^m\sym{n_i}$
is a holomorphic open map defined by 
\eqref{E:deftheta} and $\tau:\prod_{i=1}^m\sym{n_i}\lrarw\sym{n}$ is as defined by \eqref{E:deftau}. 
Moreover, $\tau$ is a biholomorphism from
$\theta({\sf{P}}(A;\,\delta))$ onto $\ch({\sf{P}}(A;\,\delta))$.
Hence to compute the rank of $\ch'(A)$ it is sufficient to compute the rank of $\theta'(A)$. 
Let us write $\theta=(\theta_1,\dots,\theta_m)$, where $\theta_i:{\sf{P}}(A;\,\delta)\lrarw\sym{n_i}$ is a holomorphic map. Observe that, at the tangent space level, we have
\begin{align*}
 &\theta'(A):T_A\big({\sf{P}}(A;\,\delta)\big)\equiv\C^{n^2}\lrarw T_{\theta(A)}\Big(\prod_{i=1}^m\sym{n_i}\Big)
 \equiv\C^{n_1}\oplus\C^{n_2}\oplus\dots\oplus\C^{n_m} \ \text{and} \\
 & \theta'_i(A):T_A\big({\sf{P}}(A;\,\delta)\big)\equiv\C^{n^2}\lrarw T_{\theta_i(A)}\big(\sym{n_i}\big)\equiv \C^{n_i}
 \ \text{ for every $i\in\intgR{1}{m}$}. 
 \end{align*}
 \noindent As before, we assume $A$ to be in Jordan canonical form and write
 \[
  A=A_1\oplus\dots\oplus A_m, \ \ \ \text{where each $A_i=\oplus_{j=1}^{p_i} J_{r_{i,j}}(\lam_i)$}.
  \]
  Here, $J_{r_{i,\,j}}(\lam_i)$ is a Jordan block of size $r_{i,\,j}$ with eigenvalue $\lam_i$ such that
  $r_{i,\,1}\leq \dots\leq r_{i,\,p_i}$. 
  \medskip 
  
  \noindent{\bf Claim.} ${\rm{rank}}\,(\theta'(A))=\sum_{i=1}^m{\rm{rank}}\,(\theta'_i(A))$.
  \medskip
  
  \noindent Let us begin with defining certain subspaces of $M_n(\C)$. Recall that $E_{i,j}\in M_n(\C)$ is
  the matrix whose $(i,\,j)$-th entry is $1$ and every other entry is $0$. Now consider
  \begin{align*}
  \mathcal{S}_0:=&\,\text{span}\big\{E_{i,j}:\text{changing $A$ to $A+E_{i,j}$ does not change
   any of the blocks  $A_i$
  for any $i$}\big\} \\
  \mathcal{S}_k:=&\,\text{span}\big\{E_{i,j}:\text{changing $A$ to $A+E_{i,j}$ only results in a change in the block
  $A_k$}\big\}
  \end{align*}
  for every $k\in\intgR{1}{m}$. Note that $\mats_l\perp\mats_k$ whenever $0\leq l\neq k\leq m$ and 
  $\mats_0\oplus \mats_1\oplus\dots\oplus \mats_m\,=\,M_n(\C)=\,T_A\big({\sf{P}}(A;\,\delta)\big).$ 
\smallskip

\noindent{\bf Subclaim.} For every $i\in\intgR{1}{m}$, $\theta'_i(A)({\mats_i}^{\perp})=\,0$, i.e., $\theta'_i(A)(H)=0$ for all $H\in{\mats_i}^{\perp}$,
which is equivalent to 
\[
\lim_{\eps\to 0}\intf{}{}{\theta_i(A+\eps\,H)-\theta_i(A)}{\eps}\,=\,0 \ \ \forall H\in {\mats_i}^{\perp}.
\]
\smallskip

\noindent To prove the subclaim, we begin with noticing that
\[
 {\mats_i}^{\perp}\,=\,\mats_0\oplus\mats_1\oplus\dots\oplus\mats_{i-1}\oplus\mats_{i+1}\oplus\dots\oplus\mats_m.
 \]
If $H\in\mats_0$, following the same idea as in the proof of
Lemma~\ref{L:ranknil}, we see that $\theta(A+\eps\,H)\,=\,\theta(A)$ for every $\eps>0$ that is sufficiently 
small. This, in particular, implies $\theta_i(A+\eps\,H)\,=\,\theta_i(A)$ for every $H\in\mats_0$. Hence
$\theta'_i(A)(H)\,=\,0$ for all $H\in\mats_0$.
\smallskip

Now, let $H\in\mats_k$, $k\in\intgR{1}{m}$, $k\neq i$. For sufficiently small $\eps>0$,
$A+\eps\,H\in{\sf{P}}(A;\,\delta)$.
Further only the $k$-th diagonal block of $A$ is perturbed when we change $A$ to $A+\eps\,H$. 
This implies the value of $\theta_i$ under such perturbation is unaffected, i.e.,
$\theta_i(A+\eps\,H)=\theta_i(A)$ for all $H\in\mats_k$, $k\in\intgR{1}{m},\,k\neq i$. Hence 
$\mats_k\subset \text{Ker}(\theta'_i(A))$ for every $k\in\intgR{0}{m},\, k\neq i$. Since 
$\text{Ker}(\theta'_i(A))$ is a subspace of $M_n(\C)$, it follows that
${\mats_i}^{\perp}\subset\text{Ker}(\theta'_i(A))$ for every $i\in\intgR{1}{m}$. Hence
\[
\big[\theta'_i(A)\big]_{n_i\times n^2}\,=\,
\begin{pmatrix}
  [0]_{n_i\times n_0} &
  [0]_{n_i\times {n_1}^2} &
  \dots &
  [\theta'_i(A)|_{\mats_i}]_{n_i\times {n_i}^2}&
  \dots &
  [0]_{n_i\times{n_m}^2} &
\end{pmatrix}
\]
Here $n_0$ is the dimension of the subspace $V_0$. So from the form of the matrix
\[
\big[\theta'(A)\big]_{n\times n^2}\,=\,
\begin{pmatrix}
  [\theta'_{1}(A)]\\
  [\theta'_{2}(A)] \\
  \vdots \\
  [\theta'_{m}(A)]\\
\end{pmatrix},
\]
we see that the non-zero block of matrices $\big[\theta'_i(A)\big]_{n_i\times n^2}$ shifts rightward when $i$ 
increases from $1$ to $m$. This, in particular, implies that the rank of the matrix
$\big[\theta'(A)\big]_{n\times n^2}$ is equal to the sum of the rank of the matrices 
$[\theta'_i(A)|_{\mats_i}]_{n_i\times {n_i}^2}$ whence our claim follows. 
\smallskip

Now fix an $i\in\intgR{1}{m}$, we shall now show that ${\rm{rank}}(\theta'_{i}(A))=r_{i,\,p_i}$. Because of Lemma~\ref{L:rankinv},
we can assume without loss of generality that $\lam_i=0$. Moreover, from the above 
discussion, it is clear that 
\[
\theta'_i(A)(M_n(\C))=\theta'_i(\mats_i\oplus{\mats_i}^{\perp})\,=
\,\theta'_i(\mats_i).\]
Notice
that if we take a matrix $H\in\mats_i$ and decompose it in blocks corresponding to the blocks of $A$ as above
then the only non-zero block is the $i$-th diagonal block of size $n_i$. With this observation in hand,
proceeding exactly as in the proof of Lemma~\ref{L:ranknil}, we see that the rank of $\theta'_i(A)\,=\,
r_{i,\,p_i}$. Since the choice of $i$ was arbitrary, using the claim above we get that
the rank of 
$\theta'(A)=\,\sum_{i=1}^{m}r_{i,\,p_i}=\text{degree of the minimal polynomial of $A$}$. 
\end{proof}

See \cite{NTZ:DiscontLempKRmetspecball08} for a simple proof of Proposition~\ref{P:rankofcharmap}
when $A$ is a non-derogatory matrix.%

\section{The proof of Theorem~\ref{T:fixedpoint} and the case of $3\times 3$ matrices}\label{S:nonderog}
We present the proof of Theorem~\ref{T:fixedpoint} in this section. As mentioned in the introduction,
a key result in the proof of this theorem is Proposition~\ref{P:rankofcharmap}. The other tool is a result due to
Vigu\'{e} stated in Section~\ref{S:Basicprelim}.

\begin{proof}[The proof of Theorem~\ref{T:fixedpoint}] We consider the map $G_{\Psi}$ associated to the map $\Psi$ as in
Lemma~\ref{L:indmapsymprod}. The map $G_{\Psi}$ satisfies
$G_{\Psi}\circ\ch=\ch\circ\Psi$. Since $\Psi(A)=A$, we see that $G_{\Psi}(\ch(A))=\ch(A)$.
Write $a=\ch(A)$. Then, by Result~\ref{Res:Vig},
the fixed-point set of the map $G_{\Psi}$ denoted by ${\rm{Fix}}(G_{\Psi})$
is a closed complex submanifold such that
\[
 T_{a}({\rm{Fix}}(G_{\Psi}))\,=\,\{\,\xi\in T_{a}(\sym{n})\,:\,G_{\Psi}'(a)\,\xi=\xi\,\}.
 \]
Differentiating both sides of $G_{\Psi}\circ\ch=\ch\circ\Psi$ at $A$ gives $G'_{\Psi}(a)\circ\ch'(A)=\ch'(\Psi(A))\circ\Psi'(A)$. Substituting
$\Psi(A)=A$ and $\Psi'(A)=\I$, we get  $G'_{\Psi}(a)\circ\ch'(A)=\ch'(A)$. In particular, every $\xi\in T_{a}(\sym{n})$ that belongs to the range 
space of $\ch'(A)$ is in $T_{a}({\rm{Fix}}(G_{\Psi}))$. Hence
 \[\dim_{\C}[T_{a}({\rm{Fix}}(G_{\Psi}))]\geq{\rm{rank}}(\ch'(A)).\]
By Proposition~\ref{P:rankofcharmap}, the rank of $\ch'(A)$ is equal to the degree of the minimal polynomial of $A$. This implies that the dimension of
the fixed-point set ${\rm{Fix}}(G_{\Psi})$ is greater than or equal to the degree of the minimal polynomial of $A$. Clearly, if $W\in\Sn$ be such that
$\ch(W)\in{\rm{Fix}}(G_{\Psi})$ then $\ch(W)=\ch(\Psi(W))$. Taking
$\mathscr{S}={\rm{Fix}}(G_{\Psi})$,
 Theorem~\ref{T:fixedpoint} follows.
\end{proof}

\subsection*{The case when $A$ is a $3\times 3$ matrix}
Let $\OM\subset\C$ be a domain such that $\#(\C\setminus\OM)\geq 6$. Consider $\Psi\in\hol\big(S_3(\OM),\,S_3(\OM)\big)$ such that 
$\Psi(A)=A$ and $\Psi'(A)=\I$.
To analyse this case completely, we present a lemma. For this,
let $\Omega$, $A$, $\Psi$ be as in the statement of Theorem~\ref{T:mainT1}. Choose a $\lam\in\C$ and consider $\Omega_{\lam}=
\{z-\lam:z\in\Omega\}$ as defined before. Note, if $\Omega$ satisfies the cardinality condition then so does
$\Omega_{\lam}$. Now recall $\Psi_{\lam}\in\hol(S_n(\Omega_{\lam}),\,S_n(\Omega_{\lam}))$ defined by
\[
\Psi_{\lam}=L_{\lam}\circ\Psi\circ L_{-\lam},
\]
where $L_{\lam}(W):=W-\lam\mathbb{I}$ and $L_{-\lam}$ is the inverse of $L_{\lam}$. Note, $\Psi_{\lam}(A_\lam)=A_{\lam}$ and $\Psi'(A_\lam)=\mathbb{I}$, where $A_\lam:=A-\lam\mathbb{I}$.

\begin{lemma}\label{L:transscale}
Let $G_{\Psi_\lam}\in\hol\big(\Sigma^n(\Omega_\lam),\,\Sigma^n(\Omega_\lam)\big)$ be the map associated to the map $\Psi_\lam$ as given 
by Lemma~\ref{L:indmapsymprod}. Then $G_{\Psi_\lam}$ is the identity map on $\Sigma^n(\Omega_\lam)$ if and only if $G_{\Psi}$ is 
the identity map on $\Sigma^n(\Omega)$.
\end{lemma}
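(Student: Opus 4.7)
The plan is to show that $G_{\Psi_\lam}$ and $G_{\Psi}$ are conjugate to each other via the biholomorphism $G_{-\lam}$, so that one equals the identity precisely when the other does. The conjugation formula is suggested by Figure~\ref{F:comd4}: one would expect
\[
G_{\Psi_\lam}\,=\,G_\lam\circ G_{\Psi}\circ G_{-\lam}
\]
on $\sym{n}_{\lam}$ (where $\sym{n}_{\lam}:=\Sigma^n(\OM_\lam)$). Since Lemma~\ref{L:indmapsymprod} asserts that the induced map on the symmetrized product is \emph{unique}, to verify this identity it suffices to check that the right-hand side satisfies the defining relation $\ch\circ\Psi_\lam\,=\,\bigl(G_\lam\circ G_{\Psi}\circ G_{-\lam}\bigr)\circ\ch$.

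First I would compute, using the relations $G_{-\lam}\circ\ch=\ch\circ L_{-\lam}$ and $G_{\Psi}\circ\ch=\ch\circ\Psi$ (and the corresponding identity $G_\lam\circ\ch=\ch\circ L_\lam$, which follows because $L_\lam=(L_{-\lam})^{-1}$ and $G_\lam=(G_{-\lam})^{-1}$):
\[
\bigl(G_\lam\circ G_\Psi\circ G_{-\lam}\bigr)\circ\ch
\,=\,G_\lam\circ G_\Psi\circ\ch\circ L_{-\lam}
\,=\,G_\lam\circ\ch\circ\Psi\circ L_{-\lam}
\,=\,\ch\circ L_\lam\circ\Psi\circ L_{-\lam}
\,=\,\ch\circ\Psi_\lam.
\]
By the uniqueness statement in Lemma~\ref{L:indmapsymprod}, this forces $G_{\Psi_\lam}=G_\lam\circ G_{\Psi}\circ G_{-\lam}$ on $\sym{n}_\lam$.

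Once this conjugation formula is in hand, the lemma is immediate. If $G_\Psi$ is the identity on $\sym{n}$, then $G_{\Psi_\lam}=G_\lam\circ G_{-\lam}$, which is the identity on $\sym{n}_\lam$ since $G_\lam$ and $G_{-\lam}$ are inverse biholomorphisms. Conversely, if $G_{\Psi_\lam}$ is the identity on $\sym{n}_\lam$, then applying $G_{-\lam}$ on the left and $G_{\lam}$ on the right to the identity $G_\lam\circ G_\Psi\circ G_{-\lam}\equiv\mathbb{I}$ yields $G_\Psi\equiv G_{-\lam}\circ G_\lam=\mathbb{I}$ on $\sym{n}$. There is no genuine obstacle here: the entire content is the conjugation identity, and the only thing to be careful about is invoking the uniqueness clause of Lemma~\ref{L:indmapsymprod} to conclude $G_{\Psi_\lam}=G_\lam\circ G_\Psi\circ G_{-\lam}$ rather than attempting to verify it pointwise through the (possibly many-valued) right inverse of $\ch$.
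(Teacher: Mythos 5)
Your proof is correct and follows the same route as the paper: the paper simply asserts the conjugation formula $G_{\Psi_\lam}=G_\lam\circ G_\Psi\circ G_{-\lam}$ as ``easy to see'' and then uses that $G_{\pm\lam}$ are inverse biholomorphisms. You have merely made the ``easy to see'' step explicit by verifying the defining relation and invoking the uniqueness clause of Lemma~\ref{L:indmapsymprod}, which is exactly the right justification.
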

\begin{proof}
It is easy to see that $G_{\Psi_\lam}=G_{\lam}\circ G_{\Psi}\circ G_{-\lam}$, where
$G_{-\lam}\big(\pi_n(z_1,\dots,z_n)\big)=\pi_n(z_1+\lam,\dots,z_n+\lam)$,
where $\pi_n:\C^n\longrightarrow\C^n$ is the
symmetrization map.
As noted before, $G_{-\lam}$ is a biholomorphism with the inverse
$G_{\lam}$ defined by $G_{\lam}\big(\pi_n(z_1,\dots,z_n)\big)=\pi_n(z_1-\lam,\dots,z_n-\lam)$.
Thus the lemma follows.
\end{proof}

We now analyse the case of order $3$ matrices. First,
using Lemma~\ref{L:probconjinva}, we shall assume $A$ to be in the Jordan canonical form.
Also, applying Lemma~\ref{L:transscale}, without loss of generality, we can assume that $0\in\sigma(A)$.
Moreover, if we let
\[
n_A(\lam)\,:=\,\text{the number of Jordan blocks corresponding to the eigenvalue $\lam$}
\]
then we can assume that $n_A(0)\geq n_A(\lam)$, where $\lam\in\sigma(A)$ and $\lam\neq 0$.
Notice that in the case when 
$1\,=\,n_A(0)\geq n_A(\lam)$, $A$ is non-derogatory and we know that $\Psi$ is spectrum-preserving by 
Theorem~\ref{T:mainT1}. In the case when $n_A(0)=n$, $A$ is the zero matrix and in this case too we are done by Theorem~\ref{T:mainT1}.
In the case $n=3$, we are only left to the case $n_A(0)=2$. Let $r_1$, $r_2$ be the sizes of these 
two blocks with $r_1\leq r_2$. The only choices that we are left with are
following:
\begin{itemize}
\item[$(a)$] The case $r_1=r_2=1$. Then
\[
A\,=\,
\begin{pmatrix}
0 & 0 & 0\\
0 & 0 & 0\\
0 & 0 & \lam\\
\end{pmatrix} \ \ \text{and} \ \ \lam\neq 0.
\]
Since $A$ is diagonal, it follows from Theorem~\ref{T:mainT1} that $\Psi$ is spectrum-preserving.
\smallskip

\item[$(b)$] The case $r_1=1,\ r_2=2$. Then
\[
A\,=\,
\begin{pmatrix}
0 & 0 & 0\\
0 & 0 & 1\\
0 & 0 & 0\\
\end{pmatrix}.
\]
Notice that $A$ is a nilpotent matrix of order $2$. The degree of the minimal polynomial for $A$ is $2$. 
In this case, by Theorem~\ref{T:fixedpoint} we get that
$G_{\Psi}$ is the identity map on a closed complex submanifold of 
$\Sigma^n(\Omega)$ of complex dimension at least $2$.
We are not able to obtain any further information about $G_{\Psi}$ in this case.
\end{itemize}

\section{Appendix: $\ch$ is an open map}\label{S:propchi}
In this section, we prove that the map $\ch$ that appears in the article is an open map. This is Proposition~\ref{P:chiopen} below. 
Not only this could be of independent interest to the reader but also
in Subsection~\ref{SS:locdecochi}, we refer to the proof of
this proposition to conclude that the map $\theta$ is an open map. Before we present our proof, we need the following result. 

\begin{result}\label{Res:contiroots}
Let $p_1,\,p_2$ be two polynomials of degree $n$ of the form
\begin{align*}
p_1(t)&=a_n+a_{n-1}t+\dots+a_1t^{n-1}+t^n,\\
p_2(t)&=b_n+b_{n-1}t+\dots+b_1t^{n-1}+t^n.
\end{align*}
Let ${\sf T}:=\max\big\{1,|a_1|,|b_1|,|a_2|^{1/2}, |b_2|^{1/2},\dots,|a_n|^{1/n},|b_n|^{1/n}\big\}$. If $\alpha_j,\,j\in\intgR{1}{n}$ are the roots of $p_1$ then
there is an ordering of the roots of $p_2$, $\beta_1,\beta_2,\dots,\beta_n$, such that
\[
|\beta_j-\alpha_j|\leq 4\,n\,{\sf T}\,{(||a-b||_2)}^{\frac{1}{n}},
\]
where $||a-b||_2:=\big(\sum_{j=1}^n|a_j-b_j|^{2}\big)^{1/2}$.
\end{result}

\noindent The above result is due to A.\,M.\,Ostrowski; see, for instance, \cite{BB:rootsofpoly99}.
Now, we prove

\begin{proposition}\label{P:chiopen}
The map $\ch:M_n(\C)\lrarw\C^n$ defined by
$\ch(B):=b=(b_1,\dots,b_n)$, where $b$ is such that the polynomial $t^n+\sum_{k=1}^n(-1)^{k}\,b_k\,t^{n-k}$ is the characteristic polynomial of 
$B$, is an open map.
\end{proposition}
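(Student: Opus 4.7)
The plan is to verify openness pointwise: given $B \in M_n(\mathbb{C})$ and a neighborhood $\mathcal{U}$ of $B$, produce $\delta > 0$ so that for every $c \in \mathbb{C}^n$ with $\|c - \ch(B)\|_2 < \delta$ there exists $B_c \in \mathcal{U}$ with $\ch(B_c) = c$. The essential trick is to work inside a Schur triangular form, where one has direct access to the eigenvalues as diagonal entries.

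First, write the Schur decomposition $B = U^{*} T U$ with $U$ unitary and $T$ upper triangular; let $\lambda_1, \ldots, \lambda_n$ be the diagonal entries of $T$, which are exactly the eigenvalues of $B$ counted with algebraic multiplicity. For a parameter $c = (c_1, \ldots, c_n) \in \mathbb{C}^n$ let $P_c(t) = t^n + \sum_{k=1}^{n}(-1)^k c_k t^{n-k}$ and let $\mu_1(c), \ldots, \mu_n(c)$ denote its roots, listed with multiplicity. By Result~\ref{Res:contiroots}, when $\|c - \ch(B)\|_2$ is small we may order these roots so that
\[
|\mu_j(c) - \lambda_j| \,\leq\, 4 n \, {\sf T} \, \|c - \ch(B)\|_2^{1/n}, \qquad j \in \intgR{1}{n},
\]
with ${\sf T}$ as in Result~\ref{Res:contiroots} (which remains bounded as $c$ varies in a neighborhood of $\ch(B)$).

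Next, define $T_c$ as the upper triangular matrix obtained from $T$ by replacing its diagonal entries $\lambda_1, \ldots, \lambda_n$ by $\mu_1(c), \ldots, \mu_n(c)$ while leaving the strictly upper triangular part of $T$ unchanged. Since $T_c$ is upper triangular, its characteristic polynomial is $\prod_{j=1}^n (t - \mu_j(c)) = P_c(t)$, hence $\ch(T_c) = c$. Setting $B_c := U^{*} T_c U$ and noting that $\ch$ is invariant under conjugation, we get $\ch(B_c) = c$. Moreover,
\[
\|B_c - B\| \,=\, \|U^{*}(T_c - T)U\| \,=\, \|T_c - T\| \,\leq\, \sqrt{n} \max_j |\mu_j(c) - \lambda_j|,
\]
and by the Ostrowski estimate the right-hand side can be made smaller than any prescribed $\varepsilon > 0$ by choosing $\delta$ small enough.

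Thus for every $c$ with $\|c - \ch(B)\|_2 < \delta$, the matrix $B_c$ lies in any preassigned neighborhood of $B$ and satisfies $\ch(B_c) = c$. This shows $\ch(\mathcal{U})$ contains a full $\mathbb{C}^n$-neighborhood of $\ch(B)$ for any neighborhood $\mathcal{U}$ of $B$, so $\ch$ is open. The only subtle step is the consistent ordering of roots under small perturbations, which is exactly what Result~\ref{Res:contiroots} delivers; everything else is routine. The same argument applies verbatim to the restriction of $\ch$ to $\Sn$, and to the factor $\theta$ in Subsection~\ref{SS:locdecochi}, by applying the construction in each of the disjoint discs $\mathbb{D}(\lambda_i; r)$ separately.
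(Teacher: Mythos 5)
Your proof is correct and follows essentially the same strategy as the paper: triangularize, perturb the diagonal entries to the roots given by Ostrowski's bound (Result~\ref{Res:contiroots}), and conjugate back. The only difference is your use of the Schur (unitary) form rather than a general similarity $S(\mathrm{diag}+U)S^{-1}$, which makes the final norm estimate slightly cleaner by eliminating the $\|S\|_{\mathrm{op}}\|S^{-1}\|_{\mathrm{op}}$ factor that appears in the paper's version.
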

\begin{proof}
Let $\mathcal{U}\subseteq M_n(\C)$ be a non-empty open set. Consider $\ch(\mathcal{U})$ and let 
$x\in\ch(\mathcal{U})$ be a fixed point. We shall show that there exists an $\eps>0$
such that $\mathbb{B}(x,\,\eps):=\{y\in\C^n: ||x-y||_2<\eps\}\subset\ch(\mathcal{U})$.
Choose $X\in\mathcal{U}$ such that $\ch(X)=x$ and write
\begin{equation}\label{E:A_0}
X=S\,\big({\rm{diag}}(\lam_1,\dots,\lam_n)+U\big)\,S^{-1},
\end{equation}
where $\lam_i$'s are eigenvalues of $X$, $S\in M_n(\C)$ is an invertible matrix and $U$ is a strictly upper triangular matrix.
\smallskip

Fix an $r\in(0,\,1)$ and consider $\mathbb{B}(x,\,r)$. Then for any $y\in\mathbb{B}(x,\,r)$,
by Result~\ref{Res:contiroots}, 
there exist complex numbers
$\mu_1,\dots,\mu_n$\,---\,that are roots of the 
polynomial $t^n+\sum_{k=1}^n\,{(-1)}^{k}\,y_k\,t^{n-k}$\,---\,such that
\[
|\lam_j-\mu_j|\leq\,4\,n\,{\sf T}\,{r}^{1/n},
\]
where ${\sf T}=\max\big\{1,|y_1|,|x_1|,|y_2|^{1/2}, |x_2|^{1/2},\dots,|y_n|^{1/n},|x_n|^{1/n}\big\}$ and $x_j,\,y_j$,
$j\in\intgR{1}{m}$, are the co-ordinates of $x$ and $y$ respectively. Since $|y_j-x_j|\leq r<1$,
we have $|y_j|\leq |x_j|+1$ for every $j\in\intgR{1}{m}$.
This shows that the constant ${\sf T}$ could be chosen in a way so that it only depends on $x$ and $n$.
\smallskip

Now consider $Y:=S\,\big({\rm{diag}}(\mu_1,\dots,\mu_n)+U\big)\,S^{-1}$, where $S$ and $U$ are as in \eqref{E:A_0}. Then 
$Y-X=S\,\big({\rm{diag}}(\mu_1-\lam_1,\dots,\mu_n-\lam_n)\big)\,S^{-1}$. Hence 
\[
||Y-X||_{\rm{op}}\,\leq\,||S||_{\rm{op}}\,||S^{-1}||_{\rm{op}}\,\max\{|\mu_j-\lam_j|\,:\,j\in\intgR{1}{n}\}\,\leq\,
||S||_{\rm{op}}\,||S^{-1}||_{\rm{op}}\,4\,n\,{\sf T}\,{r}^{1/n}.
\]
Since $X\in\mathcal{U}$ and $\mathcal{U}$ is an open set, there exists a $\delta>0$ such that the set
$\{W\in M_n(\C):||W-X||_{\rm{op}}<\delta\}
\subset\mathcal{U}$. Now, we choose an $r$ such that
$||S||_{\rm{op}}\,||S^{-1}||_{\rm{op}}\,4\,n\,{\sf T}\,{r}^{1/n}<\delta$. Then
$Y\in\{W\in M_n(\C):||W-X||_{\rm{op}}<\delta\}\subset\mathcal{U}$. Note,
$\ch(Y)=y\in\mathbb{B}(x,\,\eps)$, where $\eps=r$. 
This proves that for any arbitrary $x\in\ch(\mathcal{U})$, there exists an $\eps>0$\,---\,depending only on $x$ and
$n$\,---\,such that $\mathbb{B}(x,\,\eps)\subset\ch(\mathcal{U})$. Hence $\ch(\mathcal{U})$ is an open set.
\end{proof}

\section*{Acknowledgments}
\noindent{The second author is supported by a postdoctoral fellowship from the Harish-Chandra Research Institute (HBNI), Prayagraj. 
The third author is supported by a scholarship from the National Board for Higher Mathematics
(Ref. No. 2/39(2)/2016/NBHM/R\&D-II/11411).

\end{document}